\numberwithin{equation}{section}
\def\bb#1\eb{\textcolor{blue}
{#1}} %
\def\br#1\er{\textcolor{red}
{#1}} %
\def\bv#1\ev{\textcolor{green}
{#1}} %
\def\bc#1\ec{\textcolor{cyan}
{#1}} %
\def\Xint#1{\mathchoice
  {\XXint\displaystyle\textstyle{#1}}%
  {\XXint\textstyle\scriptstyle{#1}}%
  {\XXint\scriptstyle\scriptscriptstyle{#1}}%
  {\XXint\scriptscriptstyle\scriptscriptstyle{#1}}%
  \!\int}
\def\XXint#1#2#3{{\setbox0=\hbox{$#1{#2#3}{\int}$}
  \vcenter{\hbox{$#2#3$}}\kern-.5\wd0}}
\def\-int{\Xint -}
\newcommand{\R}{\mathbb{R}}
\newcommand{\N}{\mathbb{N}}
\DeclareMathOperator{\supp}{supp}
\DeclareMathOperator{\B}{\mathcal{B}}
\DeclareMathOperator{\K}{\mathcal{K}}
\DeclareMathOperator{\X}{\mathbb{X}}
\DeclareMathOperator{\Q}{\mathcal{Q}}
\newtheorem{prop}{Proposition}[section]
\newtheorem{lem}{Lemma}[section]
\newtheorem{thm}{Theorem}[section]
\begin{document}
\title[]{Sign-changing solutions for a class of Schr\"odinger equations with vanishing potentials}

\author[V. Ambrosio]{Vincenzo Ambrosio}
\address{
Vincenzo Ambrosio\hfill\break\indent 
Dipartimento di Scienze Pure e Applicate (DiSPeA),\hfill\break\indent
Universit\`a degli Studi di Urbino `Carlo Bo'\hfill\break\indent
Piazza della Repubblica, 13\hfill\break\indent
61029 Urbino (Pesaro e Urbino, Italy)}
\email{vincenzo.ambrosio@uniurb.it}

\author[T. Isernia]{Teresa Isernia}
\address{
Teresa Isernia\hfill\break\indent
Dipartimento di Matematica e Applicazioni `Renato Caccioppoli' \hfill\break\indent
Universit\`a degli Studi di Napoli `Federico II' \hfill\break\indent
Via Cintia 1, 80126 Napoli, 
Italy}
\email{teresa.isernia@unina.it}

\date{}

\keywords{Fractional Laplacian; sign-changing solutions; Deformation Lemma}
\subjclass[2010]{35A15, 35J60, 35R11, 45G05}

\begin{abstract}
In this paper we consider a class of fractional Schr\"odinger equations with potentials vanishing at infinity. By using a minimization argument and a quantitative Deformation Lemma, we prove the existence of a sign-changing solution.
\end{abstract}

\maketitle

\section{Introduction}

\noindent
In the past years there has been a considerable amount of research related to the existence of nontrivial solutions for Schr\"odinger-type equations
\begin{equation}\label{SE}
\left\{
\begin{array}{ll}
-\Delta u + V(x)u = K(x)f(u) \mbox{ in } \R^{N}\\
u\in \mathcal{D}^{1,2}(\R^{N})
\end{array}
\right.
\end{equation}
where $V:\R^{N}\rightarrow \R$ and $K:\R^{N}\rightarrow \R$ are positive and continuous functions, and $f:\R\rightarrow \R$ is a nonlinearity satisfying suitable growth assumptions in the origin and at infinity. An important class of problems associated to (\ref{SE}) is the so called zero mass case, which occurs when the potential $V(x)$ vanishes at infinity. Such class of problems has been investigated by many authors by using several variational methods; see for instance \cite{AS1, AFM, AW, BGM2, BL1, BVS}.

\noindent
Recently, the study of nonlinear equations involving the fractional Laplacian has gained tremendous popularity due to their intriguing analytic structure and in view of several applications in different subjects, such as, optimization, finance, anomalous diffusion, phase transition, flame propagation, minimal surface.
The literature on fractional and non-local operators of elliptic type and their applications is quite large, for example, we refer the interested reader to \cite{A0, BCDS, Cab, CS, CG, FV, MolSer, ROS, sv1, sv2} and references therein. For the basic properties of fractional Sobolev spaces with applications to partial differential equations, we refer the reader to \cite{DPV, MRS} and references therein. \\
Motivated by the interest shared by the mathematical community in this topic, the purpose of this paper is to study sign-changing (or nodal) solutions for the following class of fractional equations
\begin{equation}\label{P}
(-\Delta)^{\alpha} u + V(x)u = K(x) f(u) \, \mbox{ in } \R^{N},
\end{equation}
where $\alpha\in (0,1)$ and $N>2\alpha$, $(-\Delta)^{\alpha}$ is the fractional Laplacian which may be defined for a function $u$ belonging to the Schwartz space $ \mathcal{S}(\R^{N})$ of rapidly decaying functions as
$$
(-\Delta)^{\alpha}u(x)=C_{N,\alpha} \,P. V. \int_{\R^{N}} \frac{u(x)-u(y)}{|x-y|^{N+2\alpha}} dy,  \quad x\in \R^{N},
$$
where P.V. stands for the Cauchy principal value and $C_{N,\alpha}$ is a normalizing constant \cite{DPV}. \\
Here, we assume that $V, K : \R^{N} \rightarrow \R$ are continuous functions verifying appropriate hypotheses. 
More precisely, as in \cite{AS1}, we say $(V, K)\in \K$ if the following conditions hold:
\begin{compactenum}[($VK_1$)]
\item $V(x), K(x)>0$ for all $x \in \R^{N}$ and $K \in L^{\infty}(\R^{N})$; 
\item If $\{A_{n}\}\subset \R^{N}$ is a sequence of Borel sets such that the Lebesgue measure $|A_{n}|\leq R$, for all $n\in \N$ and some $R>0$, then 
\begin{equation*}
\lim_{r\rightarrow +\infty} \int_{A_{n}\cap \B_{r}^{c}(0)} K(x) \, dx =0, \, \mbox{ uniformly in } n\in \N.  
\end{equation*}
\end{compactenum}
Furthermore, one of the below conditions occurs
\begin{compactenum}[($VK_3$)]
\item $\displaystyle{\frac{K}{V}\in L^{\infty}(\R^{N}) }$
\end{compactenum}
or 
\begin{compactenum}[($VK_4$)]
\item there exists $m\in (2, 2^{*}_{\alpha})$ such that 
\begin{equation*}
\frac{K(x)}{V(x)^{\frac{2^{*}_{\alpha}-m}{2^{*}_{\alpha}-2}}} \rightarrow 0 \, \mbox{ as } |x|\rightarrow +\infty, 
\end{equation*}
where $\displaystyle{2^{*}_{\alpha}= \frac{2N}{N-2\alpha}}$ is the fractional critical exponent. 
\end{compactenum}

\noindent
Concerning the nonlinearity $f: \R\rightarrow \R$, we assume that $f$ is a $C^{1}$-function and fulfills the following growth conditions: 
\begin{compactenum}[($f_1$)]
\item $\displaystyle{\lim_{|t|\rightarrow 0} \frac{f(t)}{|t|}=0} \mbox{ if } (VK_3) \mbox{ holds }$
\end{compactenum}
or 
\begin{compactenum}[($\tilde{f}_1$)]
\item $\displaystyle{\lim_{|t|\rightarrow 0} \frac{f(t)}{|t|^{m-1}}=0} \mbox{ if } (VK_4)$ holds with $m \in (2, 2^{*}_{\alpha})$ defined in $(VK_4)$. 

\item [($f_{2}$)] $f$ has a quasicritical growth at infinity, namely
\begin{equation*}
\lim_{|t|\rightarrow +\infty} \frac{f(t)}{|t|^{2^{*}_{\alpha}-1}}=0,
\end{equation*}

\item [($f_{3}$)] There exists $\theta \in (2, 2^{*}_{\alpha})$ so that 
\begin{equation*}
0< \theta F(t) = \theta \int_{0}^{t} f(\tau) \, d\tau \leq f(t) \, t \, \mbox{ for all } |t|> 0, 
\end{equation*}

\item [($f_{4}$)] The map $f$ and its derivative $f'$ satisfy 
\begin{equation*}
f'(t) < \frac{f(t)}{t} \, \mbox{ for all } t\neq 0. 
\end{equation*}
\end{compactenum}

\noindent
Let us observe that by $(f_4)$ follows that $\displaystyle{t \mapsto \frac{f(t)}{|t|}}$ is strictly increasing for all $|t|>0$. Moreover,
\begin{align}\begin{split} \label{4.23} 
t\mapsto \frac{1}{2} f(t)t - F(t) &\mbox{ is strictly increasing for every } t>0 \\
& \mbox{ is strictly decreasing for every } t<0
\end{split}\end{align}
and, in particular 
\begin{equation}\label{vince}
\frac{t^{2}}{2} f'(t) - \frac{t}{2} f(t)>0 \, \mbox{ for all } \, t\neq 0.
\end{equation}

\noindent
Equation (\ref{P}) appears in a lot of studies, for instance, when we look for standing wave solutions $\psi(x,t)=u(x) e^{-\imath \omega t}$ to the following fractional Schr\"odinger equation 
$$
\imath \hbar \frac{\partial\psi}{\partial t}=\hbar^{2}(-\Delta)^{\alpha}\psi+W(x) \psi - f(x, \psi), \quad x\in \R^{N},
$$
where $\hbar$ is the Planck's constant, $W:\R^{N}\rightarrow \R$ is an external potential and $f$ is a suitable nonlinearity.
This equation plays an important role in fractional quantum mechanic, and was introduced by Laskin \cite{Laskin1, Laskin2} through expanding the Feynman path integral from the Brownian-like to the L\'evy-like quantum mechanical paths. \\
Lately the study of fractional Schr\"odinger equations has attracted the attention of many mathematicians; see for instance \cite{A1, A2, CZ, CTN1, DiPV, FQT, FLS, MR, PXZ, Secchi1} and references therein. \\
In spite of the fact that there are many papers dealing with existence and multiplicity of solutions of fractional Schr\"odinger equations in $\R^{N}$, to our knowledge there are no papers dealing with the existence of sign-changing solutions for fractional Schr\"odinger equations with potentials vanishing at infinity, and here we would like to go further in this direction. \\

\noindent
The main result of this paper is the following:
\begin{thm}\label{thm}\label{thm1}
Suppose that $(V, K)\in \K$ and $f\in C^{1}(\R, \R)$ verifies $(f_1)$ or $(\tilde{f}_1)$, and $(f_2)$-$(f_4)$. Then, the problem (\ref{P}) admits a least energy sign-changing weak solution. 
\end{thm}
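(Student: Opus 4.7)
The plan is to follow the Nehari-manifold approach developed by Bartsch--Weth for sign-changing solutions, adapted to the nonlocal setting and to the weighted vanishing-potential framework. Introduce the Hilbert space
\[
E = \Bigl\{ u \in \mathcal{D}^{\alpha,2}(\R^{N}) : \int_{\R^{N}} V(x) u^{2}\, dx < \infty \Bigr\}
\]
with norm $\|u\|^{2} = \frac{C_{N,\alpha}}{2}\iint \frac{|u(x)-u(y)|^{2}}{|x-y|^{N+2\alpha}}\, dx\, dy + \int V(x)u^{2}\, dx$, and the energy functional $I(u) = \tfrac{1}{2}\|u\|^{2} - \int K(x) F(u)\, dx$. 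The first step is to record, via the abstract framework of \cite{AS1} together with $(VK_{1})$--$(VK_{2})$ and either $(VK_{3})$ or $(VK_{4})$, the continuous and, on bounded sets of $E$, compact embedding into the weighted Lebesgue spaces in which the integral $\int K(x)|u|^{q}\, dx$ is controlled, for the relevant range of $q$ forced by $(f_{1})/(\tilde f_{1})$ and $(f_{2})$. This immediately yields that $I \in C^{1}(E,\R)$ has the mountain-pass geometry and that the Palais--Smale sequences at finite levels are bounded thanks to $(f_{3})$.

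Next I would introduce the sign-changing Nehari set
\[
\mathcal{M} = \bigl\{ u \in E : u^{\pm} \not\equiv 0, \ \langle I'(u), u^{+}\rangle = \langle I'(u), u^{-}\rangle = 0 \bigr\}
\]
and seek $c = \inf_{\mathcal{M}} I$. The key technical point, peculiar to the fractional operator, is that $u = u^{+} - u^{-}$ does \emph{not} split orthogonally: one has
\[
\|u\|^{2} = \|u^{+}\|^{2} + \|u^{-}\|^{2} + 2 \, C_{N,\alpha} \iint_{\R^{2N}} \frac{u^{+}(x)\,u^{-}(y)}{|x-y|^{N+2\alpha}}\, dx\, dy,
\]
so the cross term must be carried throughout. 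Given $u \in E$ with $u^{\pm}\neq 0$, I would analyse the map $\Phi(s,t) = I(su^{+} + tu^{-})$ on $(0,\infty)^{2}$; using $(f_{4})$ and the monotonicity property \eqref{4.23}, one shows that $\Phi$ has a unique critical point $(s_{u}, t_{u})$ in the open quadrant, which is the global maximum and provides the unique projection of $u$ onto $\mathcal{M}$. Consequently $c$ can also be computed as $c = \inf_{u^{\pm}\neq 0} \max_{s,t>0} \Phi(s,t)$, and in particular $c > 0$ by the mountain-pass geometry.

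To prove $c$ is attained, take a minimizing sequence $\{u_{n}\} \subset \mathcal{M}$; the Ambrosetti--Rabinowitz type condition $(f_{3})$ yields boundedness, so up to subsequence $u_{n} \rightharpoonup u_{0}$ in $E$ with $u_{n}^{\pm} \rightharpoonup u_{0}^{\pm}$. The compactness of the embedding into the weighted $L^{q}$-spaces forces $\int K f(u_{n}) u_{n}^{\pm} \to \int K f(u_{0}) u_{0}^{\pm}$, which combined with Fatou on the quadratic parts shows $u_{0}^{\pm} \not\equiv 0$ and that $u_{0}$ admits its own projection $\tilde{s} u_{0}^{+} + \tilde{t} u_{0}^{-} \in \mathcal{M}$; the maximality property of $(s_{u}, t_{u})$ applied to each $u_{n}$, together with weak lower semicontinuity, yields $\tilde{s} = \tilde{t} = 1$ and $I(u_{0}) = c$.

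The hardest step, which I would leave for last, is to show that any minimizer $u_{0} \in \mathcal{M}$ with $I(u_{0}) = c$ is actually a critical point of $I$ on the whole of $E$, and not just a constrained critical point. For this I would argue by contradiction, assuming $I'(u_{0}) \neq 0$: by continuity there is a neighbourhood $U$ of $u_{0}$ where $\|I'\| \geq \delta > 0$, and using the quantitative deformation lemma (for the $C^{1}$ functional $I$) one produces a deformation $\eta$ that strictly decreases $I$ below $c$ on a small ball around $u_{0}$ while fixing points outside $U$. Define $h(s,t) = \eta\bigl(1, s u_{0}^{+} + t u_{0}^{-}\bigr)$ for $(s,t)$ in a small square around $(1,1)$; a Brouwer-degree / intermediate-value computation on the map $(s,t) \mapsto \bigl(\langle I'(h(s,t)), h(s,t)^{+}\rangle, \langle I'(h(s,t)), h(s,t)^{-}\rangle\bigr)$ produces a pair $(s_{*},t_{*})$ for which $h(s_{*},t_{*}) \in \mathcal{M}$ and $I(h(s_{*},t_{*})) < c$, contradicting the definition of $c$. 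The delicate ingredient, and the main obstacle, is that the usual sign-preserving argument of the local case must be replaced here by a careful control of the nonlocal cross term: one has to verify that for $(s,t)$ close to $(1,1)$ the positive and negative parts of $h(s,t)$ remain nontrivial and that the inequalities coming from $(f_{4})$ survive the nonlocal decomposition, which is where the strict monotonicity \eqref{vince} becomes essential.
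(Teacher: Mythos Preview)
Your proposal is correct and follows essentially the same strategy as the paper: minimize $J$ on the nodal Nehari set $\mathcal{M}$, use the weighted compactness results to show the infimum is attained at some $w$ with $w^{\pm}\neq 0$, and then run a deformation-plus-Brouwer-degree argument to upgrade the constrained minimizer to a free critical point. The only cosmetic differences are that the paper obtains the projection onto $\mathcal{M}$ via Miranda's theorem rather than a direct maximization of $\Phi(s,t)$, and it builds the deformation by hand (a flow along a single descent direction $v_{0}$) instead of quoting the quantitative deformation lemma; both variants lead to the same degree computation $\deg(\Phi^{w},\mathcal{A},(0,0))=1$ and the same contradiction.
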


\noindent
For weak solution to (\ref{P}) we mean a function $u\in \X$ such that
\begin{equation*}
\iint_{\R^{2N}} \frac{(u(x)- u(y)) (\varphi(x)-\varphi(y))}{|x-y|^{N+2\alpha}} \, dxdy + \int_{\R^{N}} V(x) u(x) \varphi(x) \, dx = \int_{\R^{N}} K(x) f(u) \varphi(x) \, dx
\end{equation*}
for all $\varphi \in \X$, where 
\begin{equation*}
\X= \left\{u \in \mathcal{D}^{\alpha,2}(\R^{N}) \, : \, \int_{\R^{N}} V(x) |u|^{2} \, dx <+\infty \right\}. 
\end{equation*}
The proof of Theorem \ref{thm1} is obtained by adapting some arguments developed in \cite{AS2, BF}.
More precisely, we minimize the Euler-Lagrange functional associated to \eqref{P}, that is 
\begin{equation*}
J(u):= \frac{1}{2}\iint_{\R^{2N}} \frac{|u(x)- u(y)|^{2}}{|x-y|^{N+2\alpha}} \, dxdy+ \frac{1}{2}\int_{\R^{N}} V(x) |u|^{2} \, dx - \int_{\R^{N}} K(x) F(u) \, dx,
\end{equation*} 
on the nodal set 
\begin{equation*}
\mathcal{M}:= \{ w\in \mathcal{N} : w^{+}\neq 0, w^{-}\neq 0,  \langle J'(w), w^{+} \rangle = 0 =  \langle J'(w), w^{-} \rangle\},
\end{equation*}
where 
\begin{equation*}
\mathcal{N}:= \{ u\in \X \setminus \{0\} : \langle J'(u), u \rangle =0\}.
\end{equation*}
Then we prove that the minimum is achieved and, by using a suitable variant of the quantitative deformation Lemma, we show that it is a critical point of $J$. Clearly, due to the presence of the nonlocal term $\iint_{\R^{2N}} \frac{|u(x)- u(y)|^{2}}{|x-y|^{N+2\alpha}} \, dxdy$, the Euler-Lagrange functional $J$ does no longer satisfy the decompositions 
\begin{align*}
&J(u)= J(u^{+})+ J(u^{-}) \\
&\langle J'(u), u^{\pm} \rangle= \langle J'(u^{\pm}), u^{\pm} \rangle,
\end{align*}
which were very useful to get sign-changing solutions to \eqref{SE}; see for instance \cite{BF, BLW, BW, BWW, CCN}. Therefore, in order to prove the existence of a sign-changing solution to (\ref{P}), a more accurate investigation is needed in our setting.
\smallskip 

\noindent
The paper is organized as follows. In Section \ref{PR} we present the variational setting of the problem and we provide some compactness results which will be useful for the next sections. In Section \ref{TL} we give some technical lemmas used in the proof of the main result. Finally, in Section \ref{MRT} we prove Theorem \ref{thm1} by minimization arguments and a variant of Deformation Lemma.

\section{Preliminary results}\label{PR}

\noindent
Firstly we recall some basic notation and facts which will be used in the sequel of the paper. \\
We denote by $\mathcal{D}^{\alpha,2}(\R^{N})$ the closure of functions $C^{\infty}_{c}(\R^{N})$ with respect to the so called Gagliardo seminorm
\begin{equation*}
[u]^{2}= \iint_{\R^{2N}} \frac{|u(x)- u(y)|^{2}}{|x-y|^{N+2\alpha}} \, dxdy. 
\end{equation*}
In order to prove that problem (\ref{P}) has a variational structure, let us introduce the Hilbert space
\begin{equation*}
\X= \left\{u \in \mathcal{D}^{\alpha,2}(\R^{N})\, : \, \int_{\R^{N}} V(x) |u|^{2} \, dx <+\infty \right\}
\end{equation*}
endowed with the norm 
\begin{equation*}
\|u\|^{2}= [u]^{2} + \int_{\R^{N}} V(x)|u|^{2} dx. 
\end{equation*}

\noindent
Let $q \in \R$ such that $q\geq 1$, and let us define the weighted Lebesgue space 
\begin{equation*}
L_{K}^{q} (\R^{N}) = \left \{ u: \R^{N} \rightarrow \R \, \mbox{ measurable and } \int_{\R^{N}} K(x) |u|^{q} \, dx < \infty \right \}
\end{equation*}
endowed with the norm
\begin{equation*}
\|\cdot \|_{L_{K}^{q} (\R^{N})}= \left( \int_{\R^{N}} K(x) |u|^{q} \, dx \right)^{\frac{1}{q}}. 
\end{equation*}

\noindent
Now we prove the following continuous and compactness results, whose proofs can be obtained adapting the arguments in \cite{AS1}. For the reader's convenience we give the proofs.

\begin{lem}\label{cont}
Assume that $(V, K)\in \mathcal{K}$. Then $\X$ is continuously embedded in $L^{q}_{K}(\R^{N})$ for all $q\in [2, 2^{*}_{\alpha}]$ if $(VK_3)$ holds. Moreover, $\X$ is continuously embedded in $L^{m}_{K}(\R^{N})$ if $(VK_4)$ holds. 
\end{lem}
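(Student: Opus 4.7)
My plan is to follow the template of Alves--Souto \cite{AS1}, as the authors indicate: handle the two endpoints $q = 2$ and $q = 2^{*}_{\alpha}$ separately and then interpolate (for $(VK_3)$), while for $(VK_4)$ run a single H\"older split whose exponents are forced by the number $m$ appearing in the hypothesis.

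First I would dispose of the upper endpoint, which is free of $(VK_3)$/$(VK_4)$: by the fractional Sobolev embedding $\mathcal{D}^{\alpha,2}(\R^{N}) \hookrightarrow L^{2^{*}_{\alpha}}(\R^{N})$ and $K \in L^{\infty}(\R^{N})$ from $(VK_1)$,
\[
\int_{\R^{N}} K(x)\, |u|^{2^{*}_{\alpha}}\, dx \leq \|K\|_{\infty} \int_{\R^{N}} |u|^{2^{*}_{\alpha}}\, dx \leq C\, [u]^{2^{*}_{\alpha}} \leq C\, \|u\|^{2^{*}_{\alpha}}.
\]
Assuming $(VK_3)$, the lower endpoint is equally quick: $K \leq \|K/V\|_{\infty}\, V$ gives $\int_{\R^{N}} K|u|^{2}\, dx \leq C \|u\|^{2}$. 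For an intermediate $q \in (2, 2^{*}_{\alpha})$ I would choose $\theta = (2^{*}_{\alpha} - q)/(2^{*}_{\alpha} - 2) \in (0,1)$, so that $q = 2\theta + 2^{*}_{\alpha}(1-\theta)$, and apply H\"older with conjugate exponents $1/\theta$, $1/(1-\theta)$ to the factorization $K|u|^{q} = (K|u|^{2})^{\theta}(K|u|^{2^{*}_{\alpha}})^{1-\theta}$, producing
\[
\|u\|_{L^{q}_{K}(\R^{N})} \leq \|u\|_{L^{2}_{K}(\R^{N})}^{\,\theta}\, \|u\|_{L^{2^{*}_{\alpha}}_{K}(\R^{N})}^{\,1-\theta} \leq C \|u\|.
\]

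For $(VK_4)$ I would set $a = (2^{*}_{\alpha} - m)/(2^{*}_{\alpha} - 2)$ and $b = 1 - a = (m-2)/(2^{*}_{\alpha}-2)$, which are the unique exponents making $a + b = 1$ and $2a + 2^{*}_{\alpha}\, b = m$. The natural factorization is then
\[
K|u|^{m} = (V|u|^{2})^{a} \cdot \Bigl(\frac{K}{V^{a}}\Bigr)\, |u|^{2^{*}_{\alpha}\, b},
\]
and H\"older with exponents $1/a$, $1/b$ yields
\[
\int_{\R^{N}} K|u|^{m}\, dx \leq \Bigl(\int_{\R^{N}} V|u|^{2}\, dx\Bigr)^{a} \Bigl(\int_{\R^{N}} \bigl(K/V^{a}\bigr)^{1/b}\, |u|^{2^{*}_{\alpha}}\, dx\Bigr)^{b}.
\]
The exponent $a$ is tuned so that $K/V^{a}$ is precisely the quantity assumed in $(VK_4)$ to vanish at infinity; since $V$ is continuous and strictly positive on $\R^{N}$ by $(VK_1)$, $K/V^{a}$ is also bounded on every compact set, hence globally bounded on $\R^{N}$, and the second factor is controlled by $[u]^{2^{*}_{\alpha}\, b} \leq \|u\|^{2^{*}_{\alpha}\, b}$ via the Sobolev embedding. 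Combining gives $\int_{\R^{N}} K|u|^{m}\, dx \leq C \|u\|^{m}$, and so $\X \hookrightarrow L^{m}_{K}(\R^{N})$.

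The only mildly delicate point I anticipate is that very last observation, the \emph{global} boundedness of $K/V^{a}$: the pointwise hypothesis in $(VK_4)$ supplies only decay at infinity, so one must invoke the positivity and continuity of $V$ from $(VK_1)$ to close the argument on compact sets. Everything else is an accounting exercise in choosing H\"older exponents forced by the position of $m$ relative to $2$ and $2^{*}_{\alpha}$, and in recalling the fractional Sobolev embedding.
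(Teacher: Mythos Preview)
Your proof is correct and follows essentially the same approach as the paper's: both handle the endpoints via $(VK_1)$/$(VK_3)$ and the fractional Sobolev embedding, and then interpolate by a H\"older split with the exponent $\lambda = (2^{*}_{\alpha}-q)/(2^{*}_{\alpha}-2)$ (the paper writes $K|u|^{q} = K|u|^{2\lambda}|u|^{(1-\lambda)2^{*}_{\alpha}}$ and bounds $K^{1/\lambda}$ by $\sup(K/V^{\lambda})\,V$, whereas you equivalently split $K = K^{\theta}K^{1-\theta}$ between the two factors). Your explicit remark that the global boundedness of $K/V^{a}$ under $(VK_4)$ requires combining the decay at infinity with the continuity and positivity of $V$ on compacts is a point the paper passes over in one line, so your treatment there is in fact slightly more careful.
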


\begin{proof}
Assume that $(VK_3)$ is true. The proof is trivial if $q=2$ or $q=2^{*}_{\alpha}$. \\
Fix $q\in (2, 2^{*}_{\alpha})$ and let $\displaystyle{\lambda= \frac{2^{*}_{\alpha}-q}{2^{*}_{\alpha}-2}}$. We can observe that $q$ can be written as $q= 2\lambda + (1-\lambda)2^{*}_{\alpha}$. Then we have
\begin{align*}
\int_{\R^{N}} K(x)|u|^{q} \, dx &= \int_{\R^{N}} K(x) |u|^{2\lambda} |u|^{(1-\lambda)2^{*}_{\alpha}} \, dx \\
&\leq \left( \int_{\R^{N}} |K(x)|^{\frac{1}{\lambda}}|u|^{2}\, dx \right)^{\lambda}  \left( \int_{\R^{N}}|u|^{2^{*}_{\alpha}}\, dx \right)^{1-\lambda} \\
&\leq \left(\sup_{x\in \R^{N}} \frac{|K(x)|}{|V(x)|^{\lambda}} \right) \left( \int_{\R^{N}} V(x) |u|^{2}\, dx \right)^{\lambda}  \left( \int_{\R^{N}}|u|^{2^{*}_{\alpha}}\, dx \right)^{1-\lambda} \\
&\leq C \left(\sup_{x\in \R^{N}} \frac{|K(x)|}{|V(x)|^{\lambda}} \right) \left( \int_{\R^{N}} V(x) |u|^{2}\, dx \right)^{\lambda}  \left( \iint_{\R^{2N}} \frac{|u(x)- u(y)|^{2}}{|x-y|^{N+2\alpha}}\, dxdy \right)^{\frac{(1-\lambda)2^{*}_{\alpha}}{2}} \\
&\leq C \left(\sup_{x\in \R^{N}} \frac{|K(x)|}{|V(x)|^{\lambda}} \right) \|u\|^{2\left[\lambda +\frac{(1-\lambda)2^{*}_{\alpha}}{2}\right]}\\
&= C \left(\sup_{x\in \R^{N}} \frac{|K(x)|}{|V(x)|^{\lambda}} \right)  \|u\|^{q}. 
\end{align*}
Taking into account $K\in L^{\infty}(\R^{N})$ and that $(VK_3)$ holds true, we conclude that
\begin{equation*}
\|u\|_{L^{q}_{K}(\R^{N})}\leq C \|u\|. 
\end{equation*}
Now, we suppose that $(VK_4)$ is true. Denoting $\displaystyle{\lambda_{0}= \frac{2^{*}_{\alpha}-m}{2^{*}_{\alpha}-2}}$, we can see that $m$ can be written as $m= 2\lambda_{0}+ (1- \lambda_{0})2^{*}_{\alpha}$. As above, we have
\begin{align*}
\int_{\R^{N}} K(x)|u|^{m} \, dx &= \int_{\R^{N}} K(x) |u|^{2\lambda_{0}} |u|^{(1-\lambda_{0})2^{*}_{\alpha}} \, dx \\
&\leq \left( \int_{\R^{N}} |K(x)|^{\frac{1}{\lambda_{0}}}|u|^{2}\, dx \right)^{\lambda_{0}}  \left( \int_{\R^{N}}|u|^{2^{*}_{\alpha}}\, dx \right)^{1-\lambda_{0}} \\
&\leq \left(\sup_{x\in \R^{N}} \frac{|K(x)|}{|V(x)|^{\lambda_{0}}} \right) \left( \int_{\R^{N}} V(x) |u|^{2}\, dx \right)^{\lambda_{0}}  \left( \int_{\R^{N}}|u|^{2^{*}_{\alpha}}\, dx \right)^{1-\lambda_{0}} \\
&\leq C \left(\sup_{x\in \R^{N}} \frac{|K(x)|}{|V(x)|^{\lambda}} \right)  \|u\|^{m}. 
\end{align*}
Since $\displaystyle{\frac{K(x)}{V(x)^{\frac{2^{*}_{\alpha}-m}{2^{*}_{\alpha}-2}}}\in L^{\infty}(\R^{N})}$, we can infer that
\begin{equation*}
\|u\|_{L^{m}_{K}(\R^{N})}\leq C \|u\|. 
\end{equation*}
This complete the proof of the Lemma \ref{cont}. 
\end{proof}

\begin{prop}\label{prop2.2}
Assume $(V, K)\in \K$. The following facts hold: 
\begin{compactenum}[\it{(1)}]
\item [\it{(1)}] $\X$ is compactly embedded into $L_{K}^{q}(\R^{N})$ for all $q \in (2, 2^{*}_{\alpha})$ if $(VK_3)$ holds; 
\item [\it{(2)}] $\X$ is compactly embedded into $L_{K}^{m}(\R^{N})$ if $(VK_4)$ holds.
\end{compactenum}
\end{prop}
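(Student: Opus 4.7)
The proof proceeds by adapting the scheme of \cite{AS1}: take a bounded sequence $(u_n)_n\subset \X$, pass to a subsequence with $u_n\rightharpoonup u$ in $\X$, set $v_n:=u_n-u$ so that $v_n\rightharpoonup 0$, and prove $v_n\to 0$ in $L^q_K(\R^N)$ for case (1) or in $L^m_K(\R^N)$ for case (2). For any fixed $R>0$, restricting the weak convergence $v_n\rightharpoonup 0$ in $\mathcal{D}^{\alpha,2}(\R^N)$ to $B_R(0)$ together with the fractional Rellich--Kondrachov compactness yields $v_n\to 0$ strongly in $L^s(B_R(0))$ for every $s\in[1,2^*_\alpha)$, and since $K\in L^\infty(\R^N)$ by $(VK_1)$ this gives $\int_{B_R(0)} K|v_n|^q\,dx\to 0$ as $n\to\infty$ for each fixed $R$. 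Everything is therefore reduced to producing a uniform-in-$n$ tail estimate $\int_{B_R(0)^c}K|v_n|^q\,dx<\varepsilon$ for $R$ sufficiently large.

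For case (1) under $(VK_3)$, I would fix $\eta>0$ and split the tail as $B_R^c=E_n^1\cup E_n^2$ with $E_n^1=B_R^c\cap\{|v_n|\leq \eta\}$ and $E_n^2=B_R^c\cap\{|v_n|>\eta\}$. On $E_n^1$, since $q>2$ we have $|v_n|^q\leq \eta^{q-2}|v_n|^2$, and $(VK_3)$ gives $\int_{E_n^1}K|v_n|^q\,dx\leq \eta^{q-2}\|K/V\|_\infty \|v_n\|^2 \leq C\eta^{q-2}$, which is small once $\eta$ is chosen small. On $E_n^2$, Chebyshev yields $|E_n^2|\leq \eta^{-2^*_\alpha}\int |v_n|^{2^*_\alpha}\leq C$ uniformly in $n$, so hypothesis $(VK_2)$ applied to this family ensures $\int_{E_n^2}K\,dx\to 0$ uniformly in $n$ as $R\to+\infty$. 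Then H\"older's inequality with conjugate exponents $\bigl(\tfrac{2^*_\alpha}{2^*_\alpha-q},\tfrac{2^*_\alpha}{q}\bigr)$ applied to the factorization $K|v_n|^q=K^{(2^*_\alpha-q)/2^*_\alpha}\cdot K^{q/2^*_\alpha}|v_n|^q$ gives
\begin{equation*}
\int_{E_n^2}K|v_n|^q\,dx \leq \Bigl(\int_{E_n^2}K\,dx\Bigr)^{(2^*_\alpha-q)/2^*_\alpha}\Bigl(\|K\|_\infty \int_{\R^N}|v_n|^{2^*_\alpha}\,dx\Bigr)^{q/2^*_\alpha},
\end{equation*}
which vanishes uniformly in $n$ as $R\to+\infty$. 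Picking $\eta$ small first and then $R$ large delivers the required uniform tail bound.

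For case (2) under $(VK_4)$, the argument is cleaner because the hypothesis is quantitative. Set $\lambda_0:=\tfrac{2^*_\alpha-m}{2^*_\alpha-2}$, so that $m=2\lambda_0+(1-\lambda_0)2^*_\alpha$. Given $\delta>0$, choose $R_\delta>0$ such that $K(x)\leq \delta V(x)^{\lambda_0}$ for $|x|\geq R_\delta$. H\"older with conjugates $\bigl(\tfrac{1}{\lambda_0},\tfrac{1}{1-\lambda_0}\bigr)$ applied to the decomposition $V^{\lambda_0}|v_n|^m=(V|v_n|^2)^{\lambda_0}\cdot |v_n|^{(1-\lambda_0)2^*_\alpha}$ produces $\int V^{\lambda_0}|v_n|^m\,dx\leq \|v_n\|^{2\lambda_0}\bigl(\int|v_n|^{2^*_\alpha}\,dx\bigr)^{1-\lambda_0}\leq C$, whence $\int_{B_{R_\delta}^c}K|v_n|^m\,dx\leq \delta C$, arbitrarily small by shrinking $\delta$.

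The main technical obstacle is the uniform tail bound in case (1): hypothesis $(VK_2)$ controls $K$ at infinity only over Borel sets of uniformly bounded Lebesgue measure, so the level-set decomposition of $v_n$ at height $\eta$ is indispensable to produce a family to which $(VK_2)$ applies, and the subsequent H\"older factorization that turns $\int K|v_n|^q$ into a small power of $\int K$ times a bounded quantity is the crucial step. Case (2) sidesteps this difficulty because the pointwise decay of $K/V^{\lambda_0}$ already provides the tail bound without any set decomposition.
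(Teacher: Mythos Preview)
Your proof is correct and follows the same overall scheme as the paper---local compactness on balls via fractional Rellich--Kondrachov, plus a uniform-in-$n$ tail estimate---but the tail estimates are organized differently. In case~(1) the paper proves a single pointwise inequality $K(x)|t|^{q}\le \varepsilon C\bigl(V(x)|t|^{2}+|t|^{2^{*}_{\alpha}}\bigr)+C\,K(x)\chi_{[t_{0},t_{1}]}(|t|)\,|t|^{2^{*}_{\alpha}}$, so the tail splits into an $\varepsilon$-small piece and an integral of $K$ over $\{t_{0}\le|u_{n}|\le t_{1}\}$, to which $(VK_{2})$ applies; you instead make a two-level cut at height $\eta$ and handle the upper set $\{|v_{n}|>\eta\}$ via the H\"older factorization $K|v_{n}|^{q}=K^{(2^{*}_{\alpha}-q)/2^{*}_{\alpha}}\cdot K^{q/2^{*}_{\alpha}}|v_{n}|^{q}$, which avoids introducing a separate ``large $|t|$'' regime. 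In case~(2) the paper derives the pointwise inequality $C_{m}V(x)^{\lambda_{0}}\le V(x)t^{2-m}+t^{2^{*}_{\alpha}-m}$ by minimizing in $t$, while you obtain the same bound by a direct H\"older inequality with exponents $(1/\lambda_{0},1/(1-\lambda_{0}))$; these are equivalent (the minimization is the equality case of Young's inequality underlying your H\"older step). A final cosmetic difference: you work with $v_{n}=u_{n}-u$ and show $\|v_{n}\|_{L^{q}_{K}}\to 0$ directly, whereas the paper shows $\int K|u_{n}|^{q}\to\int K|u|^{q}$ and implicitly uses norm convergence plus weak convergence to conclude strong convergence.
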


\begin{proof}
$\it{(1)}$ Assume that $(VK_3)$ holds. Fix $q \in (2, 2^{*}_{\alpha})$ and let $\varepsilon >0$. Then, there exist $0<t_{0}<t_{1}$ and a positive constant $C$ such that 
\begin{equation*}
K(x) |t|^{q} \leq \varepsilon \, C \left[V(x)|t|^{2} + |t|^{2^{*}_{\alpha}} \right] + C\, K(x) \chi_{[t_{0}, t_{1}]}(|t|) |t|^{2^{*}_{\alpha}}, \, \mbox{ for all } t\in \R. 
\end{equation*}
Integrating over $\B_{r}^{c}(0)$ we have, for all $u\in \X$ and $r>0$, 
\begin{align}\begin{split}\label{2.5}
\int_{\B_{r}^{c}(0)} K(x) |u|^{q} \, dx &\leq \varepsilon \, C \int_{\B_{r}^{c}(0)} \left[V(x) |u|^{2} + |u|^{2^{*}_{\alpha}} \right]\, dx + C t_{1}^{2^{*}_{\alpha}} \int_{A\cap \B_{r}^{c}(0)} K(x)\, dx \\
&=: \varepsilon \, C \mathcal{Q}(u) + C t_{1}^{2^{*}_{\alpha}} \int_{A\cap \B_{r}^{c}(0)} K(x)\, dx 
\end{split}\end{align}
where we set
\begin{equation*}
\mathcal{Q}(u):=\int_{\B_{r}^{c}(0)} \left[V(x) |u|^{2} + |u|^{2^{*}_{\alpha}} \right]\, dx \, \mbox{ and } \, A=\left\{ x\in \R^{N} : t_{0} \leq |u(x)| \leq t_{1}\right\}. 
\end{equation*}

\noindent
Now, if $\{u_{n}\}\subset \X$ is a sequence such that $u_{n}\rightharpoonup u$ in $\X$, then there is $M>0$ such that
\begin{equation}\label{ter}
\|u_{n}\|^{2}\leq M \, \mbox{ and }\, \int_{\R^{N}} |u_{n}|^{2^{*}_{\alpha}}\, dx \leq M \quad \forall n\in \N.  
\end{equation}
This implies that $\{\Q(u_n)\}$ is bounded from above by a positive constant. Let us denote by $A_{n}=\left\{ x\in \R^{N} : t_{0}\leq |u_n|\leq t_{1} \right \}$. By (\ref{ter}) we deduce 
\begin{equation*}
t_{0}^{2^{*}_{\alpha}} m(A_{n}) \leq \int_{A_{n}} |u_{n}|^{2^{*}_{\alpha}} dx \leq M \quad \forall n\in \N, 
\end{equation*}
which implies that $\sup_{n\in \N} |m(A_{n})|<+\infty$. Therefore, from $(VK_2)$ there exists a positive radius $r$ large enough such that 
\begin{equation}\label{new}
\int_{A_{n}\cap \B_{r}^{c}(0)} K(x) \, dx < \frac{\varepsilon}{t_{1}^{2^{*}_{\alpha}}} \, \mbox{ for all } n\in \N.
\end{equation}
Putting together (\ref{2.5}) and (\ref{new}) we obtain
\begin{align}\label{2.6}
\int_{\B_{r}^{c}(0)} K(x) |u_{n}|^{q} \, dx &\leq \varepsilon \, C \,M+ C\,t_{1}^{2^{*}_{\alpha}} \int_{A_{n}\cap \B_{r}^{c}(0)} K(x)\, dx  \nonumber \\
&\leq (C\,M +C) \varepsilon \, \mbox{ for all } n\in \N. 
\end{align}
Recalling that $q \in (2, 2^{*}_{\alpha})$ and that $K$ is a continuous function, by Sobolev embedding follows that
\begin{equation}\label{2.7}
\lim_{n\rightarrow \infty} \int_{\B_{r}(0)} K(x) |u_{n}|^{q} dx = \int_{\B_{r}(0)} K(x) |u|^{q} dx. 
\end{equation}
By (\ref{2.6}) for $\varepsilon>0$ small enough and (\ref{2.7}) it holds
\begin{equation*}
\lim_{n\rightarrow \infty} \int_{\R^{N}} K(x) |u_{n}|^{q} dx = \int_{\R^{N}} K(x) |u|^{q} dx
\end{equation*}
from which we conclude that
\begin{equation*}
u_{n} \rightarrow u \mbox{ in } L_{K}^{q}(\R^{N}), \mbox{ for every } q\in (2, 2^{*}_{\alpha}). 
\end{equation*}
\smallskip

\noindent
$\it{(2)}$ Let us suppose that $(VK_4)$ is true. Then, we can see that for each $x\in \R^{N}$ fixed, the function
\begin{equation*}
g(t)= V(x) t^{2-m} + t^{2^{*}_{\alpha}-m}, \mbox{ for every } t>0, 
\end{equation*}
has $C_{m} V(x)^{\frac{2^{*}_{\alpha}-m}{2^{*}_{\alpha}-2}}$ as its minimum value, where $\displaystyle{C_{m}=\left(\frac{2^{*}_{\alpha}-2}{2^{*}_{\alpha}-m}\right)\left(\frac{m-2}{2^{*}_{\alpha}-2}\right)^{\frac{2-m}{2^{*}_{\alpha}-2}}}$. 
Hence 
\begin{equation*}
C_{m} V(x)^{\frac{2^{*}_{\alpha}-m}{2^{*}_{\alpha}-2}} \leq V(x) t^{2-m} + t^{2^{*}_{\alpha}-m}, \mbox{ for every } x\in \R^{N} \mbox{ and } t>0. 
\end{equation*}
Combining the last inequality with $(VK_4)$, for any $\varepsilon> 0$, we can find a positive radius $r$ sufficiently large such that
\begin{equation*}
K(x)|t|^{m} \leq \varepsilon C'_{m} \left[V(x)|t|^{2} + |t|^{2^{*}_{\alpha}} \right], \mbox{ for every } t\in \R \mbox{ and } |x|\geq r,
\end{equation*}
where $C'_{m}$ is the inverse of $C_{m}$, and integrating over $\B_{r}^{c}(0)$ we get
\begin{equation}\label{ee}
\int_{\B_{r}^{c}(0)} K(x)|u|^{m} \, dx \leq \varepsilon C'_{m} \left[\|u\|^{2} + \|u\|_{L^{2^{*}_{\alpha}}(\R^{N})}^{2^{*}_{\alpha}}\right] \mbox{ for all } u\in \X.  
\end{equation}
If $\{u_{n}\}\subset \X$ is a sequence such that $u_{n} \rightharpoonup u$ in $\X$, by (\ref{ee}) we deduce that
\begin{equation}\label{eee}
\int_{\B_{r}^{c}(0)} K(x)|u|^{m} \, dx \leq \varepsilon C''_{m} \mbox{ for all } n\in \N. 
\end{equation}
Once that $m\in (2, 2^{*}_{\alpha})$ and $K$ is a continuous function, it follows from Sobolev embedding that 
\begin{equation}\label{2.7e}
\lim_{n\rightarrow \infty} \int_{\B_{r}(0)} K(x) |u_{n}|^{m} dx = \int_{\B_{r}(0)} K(x) |u|^{m} dx. 
\end{equation}
Then, (\ref{eee}) and (\ref{2.7e}) yield
\begin{equation*}
\lim_{n\rightarrow \infty} \int_{\R^{N}} K(x) |u_{n}|^{m} dx = \int_{\R^{N}} K(x) |u|^{m} dx, 
\end{equation*}
from which we can infer that
\begin{equation*}
u_{n} \rightarrow u \mbox{ in } L_{K}^{m}(\R^{N}), \mbox{ for every } m \in (2, 2^{*}_{\alpha}). 
\end{equation*}
\end{proof}

\noindent
The following lemma is a compactness result related to the nonlinear term. 
\begin{lem}\label{lem2.1}
Assume $(V, K)\in \K$ and $f$ satisfies $(f_1)$-$(f_2)$ or $(\tilde{f}_1)$-$(f_2)$. Let $\{u_{n}\}$ be a sequence such that $u_{n}\rightharpoonup u$ in $\X$, then, up to a subsequence, 
\begin{equation*}
\lim_{n\rightarrow \infty} \int_{\R^{N}} K(x) F(u_{n}) \, dx = \int_{\R^{N}} K(x) F(u) \, dx
\end{equation*}
and 
\begin{equation*}
\lim_{n\rightarrow \infty} \int_{\R^{N}} K(x) f(u_{n}) u_{n} \, dx = \int_{\R^{N}} K(x) f(u) u \, dx.
\end{equation*}
\end{lem}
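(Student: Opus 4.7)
My plan is to apply Vitali's convergence theorem to the sequences $\{K(x) f(u_n) u_n\}_{n \in \N}$ and $\{K(x) F(u_n)\}_{n \in \N}$ viewed in $L^{1}(\R^{N})$. Up to a subsequence, $u_n \to u$ a.e.\ in $\R^{N}$ (for instance by Proposition~\ref{prop2.2} together with the standard extraction of an a.e.-convergent subsequence from strong $L^{q}_{K}$-convergence on bounded balls), and by the continuity of $f$ the integrands converge pointwise a.e. It therefore remains only to verify that $\{K f(u_n) u_n\}$ and $\{K F(u_n)\}$ are uniformly absolutely continuous and tight in $L^{1}(\R^{N})$.

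The key ingredient is a splitting estimate on the nonlinearity. Combining $(f_1)$ (or $(\tilde{f}_1)$) with $(f_2)$, and absorbing the bounded intermediate range $\{\delta \leq |t| \leq R_{\eta}\}$ into a convenient power of $|t|$, I obtain that for every $\eta > 0$ there exist $C_{\eta} > 0$ and an exponent $q \in (2, 2^{*}_{\alpha})$ such that
\begin{equation*}
|f(t) t| + |F(t)| \leq \eta \bigl(|t|^{2} + |t|^{2^{*}_{\alpha}}\bigr) + C_{\eta} |t|^{q} \qquad \text{for all } t \in \R \text{ under } (f_1),
\end{equation*}
whereas under $(\tilde{f}_1)$, with $m \in (2, 2^{*}_{\alpha})$ as in $(VK_4)$,
\begin{equation*}
|f(t) t| + |F(t)| \leq \eta |t|^{2^{*}_{\alpha}} + C_{\eta} |t|^{m} \qquad \text{for all } t \in \R.
\end{equation*}

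Next I would exploit the embeddings established in Section~\ref{PR}: Lemma~\ref{cont}, together with $K \in L^{\infty}(\R^{N})$ and the fractional Sobolev embedding $\mathcal{D}^{\alpha,2}(\R^{N}) \hookrightarrow L^{2^{*}_{\alpha}}(\R^{N})$, yields a uniform bound of the form $\int_{\R^{N}} K |u_n|^{2}\, dx + \int_{\R^{N}} K |u_n|^{2^{*}_{\alpha}}\, dx \leq C$ under $(VK_3)$ (and analogously with $|u_n|^{m}$ in place of $|u_n|^{2}$ under $(VK_4)$). At the same time, Proposition~\ref{prop2.2} gives $u_n \to u$ in $L^{q}_{K}(\R^{N})$ (respectively in $L^{m}_{K}(\R^{N})$), so that the nonnegative $L^{1}$-convergent sequence $K |u_n|^{q}$ (respectively $K|u_n|^{m}$) is automatically equi-integrable and tight. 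Given $\varepsilon > 0$, I would first pick $\eta$ so small that $\eta$ times the uniform bound on $\int K|u_n|^{2} + \int K|u_n|^{2^{*}_{\alpha}}$ is less than $\varepsilon/2$, and then use the equi-integrability and tightness of the $|u_n|^{q}$-term to select $\delta > 0$ and $R > 0$ guaranteeing $\int_{E} K|f(u_n)u_n|\, dx < \varepsilon$ whenever $|E| < \delta$ and $\int_{\B_{R}^{c}(0)} K|f(u_n)u_n|\, dx < \varepsilon$, both uniformly in $n$; the same argument, verbatim, applies to $K F(u_n)$. Vitali's theorem then delivers the two stated limits.

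The main obstacle I anticipate is the critical tail $|t|^{2^{*}_{\alpha}}$, for which no compact embedding into $L^{2^{*}_{\alpha}}_{K}(\R^{N})$ is available: concentration phenomena cannot be ruled out term-by-term, and the sequence $K|u_n|^{2^{*}_{\alpha}}$ is in general not equi-integrable. This is precisely what forces the $\eta$--$C_{\eta}$ splitting above, and it is the quasicritical hypothesis $(f_2)$ (rather than pure critical growth) that makes this device work, allowing the uncontrollable critical contribution to be scaled by an arbitrarily small factor $\eta$ before Vitali is invoked on the genuinely subcritical remainder, which is governed by the compact embedding of Proposition~\ref{prop2.2}.
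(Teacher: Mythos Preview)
Your proposal is correct, and the core ideas coincide with the paper's proof: both arguments hinge on the same $\eta$--$C_{\eta}$ splitting of $|f(t)t|$ into a small-coefficient piece controlled by the uniformly bounded quantities $\int V|u_n|^{2}$ and $\int |u_n|^{2^{*}_{\alpha}}$, plus a subcritical remainder $|t|^{q}$ (or $|t|^{m}$) handled via the compact embedding of Proposition~\ref{prop2.2}. The difference lies only in the final packaging: you invoke Vitali's theorem (tightness plus equi-integrability, using that the $L^{1}$-convergent sequence $K|u_n|^{q}$ is automatically equi-integrable), whereas the paper decomposes $\R^{N}$ into $\B_r(0)$ and its complement, bounds the tail directly from the splitting estimate and $(VK_2)$, and then appeals to the Strauss compactness lemma on the bounded ball. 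Your route is slightly more streamlined and avoids the separate citation of Strauss; the paper's route is more hands-on and makes the role of $(VK_2)$ in the $(VK_4)$ case more explicit. Either way, the substantive analysis is the same.
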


\begin{proof}
Assume that $(VK_3)$ holds. From $(f_1)$-$(f_2)$, fixed $q \in (2, 2^{*}_{\alpha})$ and given $\varepsilon>0$, there exists $C>0$ such that 
\begin{equation}\label{2.8}
|K(x)f(t)t|\leq \varepsilon C \left[V(x) |t|^{2} + |t|^{2^{*}_{\alpha}} \right] + C K(x) |t|^{q}, \mbox{ for all } t\in \R. 
\end{equation}
From Proposition \ref{prop2.2} we know that
\begin{equation*}
\lim_{n\rightarrow \infty} \int_{\R^{N}} K(x)|u_{n}|^{q} dx = \int_{\R^{N}} K(x)|u|^{q} dx,
\end{equation*}
so there exists $r>0$ such that
\begin{equation}\label{2.9}
\int_{\B_{r}^{c}(0)} K(x) |u_{n}|^{q} dx <\varepsilon, \mbox{ for all } n\in \N. 
\end{equation}
Since $\{u_{n}\}\subset \X$ is bounded, there exists a positive constant $C'$ such that 
\begin{equation}\label{aa}
\int_{\R^{N}} V(x)|u_{n}|^{2} dx \leq C' \, \mbox{ and } \, \int_{\R^{N}} |u_{n}|^{2^{*}_{\alpha}} dx \leq C', \mbox{ for all } n\in \N. 
\end{equation}
Taking into account (\ref{2.8}), (\ref{2.9}), and (\ref{aa}) we have
\begin{equation*}
\int_{\B_{r}^{c}(0)} K(x) |u_{n}|^{q} dx< (2CC' + 1) \varepsilon, \mbox{ for all } n\in \N.   
\end{equation*}

\noindent
Assume that $(VK_4)$ holds. Similarly to the second part of Proposition \ref{prop2.2}, given $\varepsilon>0$ sufficiently small, there exists $r>0$ large enough such that
\begin{equation*}
K(x)\leq \varepsilon C'_{m} \left[V(x) |t|^{2-m} + |t|^{2^{*}_{\alpha}-m}\right], \mbox{ for every } |t|>0 \mbox{ and } |x|>r.  
\end{equation*}
Consequently, for all $|t|>0$ and $|x|>r$
\begin{equation*}
K(x) |f(t)t|\leq \varepsilon C'_{m} \left[V(x) |f(t)t| |t|^{2-m} + |f(t)t| |t|^{2^{*}_{\alpha}-m}\right].  
\end{equation*}
From $(\tilde{f}_{1})$ and $(f_2)$, there exist $C, t_{0}, t_{1}>0$ satisfying 
\begin{equation*}
K(x) |f(t)t|\leq \varepsilon C \left[V(x) |t|^{2} + |t|^{2^{*}_{\alpha}}\right], \mbox{ for every } t\in I \mbox{ and } |x|>r  
\end{equation*}
where $I=\{t \in \R : |t|<t_{0} \mbox{ or } |t|>t_{1}\}$. Therefore, for every $u\in \X$, setting 
\begin{equation*}
\Q(u)= \int_{\R^{N}} V(x)|u|^{2} \, dx + \int_{\R^{N}} |u|^{2^{*}_{\alpha}}\, dx
\end{equation*}
and $A=\{x\in \R^{N} : t_{0}\leq |u(x)|\leq t_{1}\}$, the following estimate holds
\begin{equation*}
\int_{\B_{r}^{c}(0)} K(x) f(u)u \, dx \leq \varepsilon C \Q(u) + C \int_{A\cap \B_{r}^{c}(0)} K(x)\, dx.    
\end{equation*} 
Due to the boundedness of $\{u_{n}\}\subset \X$, we can find $C'>0$ such that
\begin{equation*}
\int_{\R^{N}} V(x)|u_{n}|^{2} dx \leq C' \, \mbox{ and } \, \int_{\R^{N}} |u_{n}|^{2^{*}_{\alpha}} dx \leq C', \mbox{ for all } n\in \N. 
\end{equation*}
Therefore 
\begin{equation*}
\int_{\B_{r}^{c}(0)} K(x) f(u_{n})u_{n} \, dx \leq \varepsilon C''+ C \int_{A_{n}\cap \B_{r}^{c}(0)} K(x)\, dx, 
\end{equation*} 
where $A_{n}=\{x\in \R^{N} : t_{0}\leq |u_{n}(x)|\leq t_{1}\}$. Following the same arguments in the proof of Proposition \ref{prop2.2} and by $(VK_2)$ we deduce that
\begin{equation*}
\int_{A_{n}\cap \B_{r}^{c}(0)} K(x) \, dx \rightarrow 0 \mbox{ as } r\rightarrow +\infty
\end{equation*}
uniformly in $n\in \N$ and, for $\varepsilon>0$ small enough
\begin{equation*}
\left|\int_{\B_{r}^{c}(0)} K(x) f(u_{n}) u_{n} dx \right| < (C'' + 1) \varepsilon.   
\end{equation*}
In order to complete the proof, we need to prove that
\begin{equation*}
\lim_{n\rightarrow +\infty} \int_{\B_{r}(0)} K(x) f(u_{n})u_{n} \, dx = \int_{\B_{r}(0)} K(x) f(u)u \, dx 
\end{equation*}
which easily follows by the compactness Lemma of Strauss \cite{BL1}. 
\end{proof}

\section{Technical Lemmas}\label{TL}

\noindent
In what follows we look for sign-changing weak solutions of problem (\ref{P}), that is a function $u\in \X$ such that $u^{+}:=\max \{u, 0\}\neq 0$, $u^{-}:= \min \{u, 0\}\neq 0$ in $\R^{N}$ and
\begin{equation*}
\iint_{\R^{2N}} \frac{(u(x)- u(y)) (\varphi(x)-\varphi(y))}{|x-y|^{N+2\alpha}} \, dxdy + \int_{\R^{N}} V(x) u(x) \varphi(x) \, dx = \int_{\R^{N}} K(x) f(u) \varphi(x) \, dx
\end{equation*}
for all $\varphi \in \X$. 

\noindent
In view of the assumptions on $V,K$ and $f$, we can see that the functional $J: \X\rightarrow \R$ defined by 
\begin{equation*}
J(u):= \frac{1}{2}[u]^{2} + \frac{1}{2}\int_{\R^{N}} V(x) |u|^{2} \, dx - \int_{\R^{N}} K(x) F(u) \, dx
\end{equation*} 
is Fr\'echet differentiable and that its differential $J'$ is given by
\begin{equation*}
\langle J'(u), \varphi \rangle = \iint_{\R^{2N}} \frac{(u(x)- u(y)) (\varphi(x)-\varphi(y))}{|x-y|^{N+2\alpha}} \, dx \, dy + \int_{\R^{N}} V(x) u(x) \varphi(x) \, dx - \int_{\R^{N}} K(x) f(u) \varphi(x) \, dx
\end{equation*}
for all $u, \varphi \in \X$.
Then, weak solutions to the problem (\ref{P}) are critical points of $J$.\\ 
Associated to $J$, we introduce the following Nehari manifold 
\begin{equation*}
\mathcal{N}:= \{ u\in \X \setminus \{0\} : \langle J'(u), u \rangle =0\}.
\end{equation*}
Since we look for sign-changing solutions to (\ref{P}), it is natural to seek functions $w\in \mathcal{M}$ such that
$$
J(w)=\inf_{v\in \mathcal{M}} J(v),
$$
where
\begin{equation*}
\mathcal{M}:= \{ w\in \mathcal{N} : w^{+}\neq 0, w^{-}\neq 0,  \langle J'(w), w^{+} \rangle = 0 =  \langle J'(w), w^{-} \rangle\}. 
\end{equation*}
Let us point out that for all $u\in \mathbb{X}$
$$
[u]^{2}=[u^{+}]^{2}+[u^{-}]^{2}-\iint_{\R^{2N}} \frac{(u^{+}(x)u^{-}(y)+u^{-}(x)u^{+}(y))}{|x-y|^{N+2\alpha}} \, dx \, dy\geq [u^{+}]^{2}+[u^{-}]^{2}
$$
so, we can deduce that 
\begin{align*}
J(u)=J(u^{+})+J(u^{-}) -\iint_{\R^{2N}} \frac{u^{+}(x)u^{-}(y) + u^{-}(x) u^{+}(y)}{|x-y|^{N+2\alpha}} \, dxdy,
\end{align*}
and
\begin{align*}
\langle J'(u), u^{+}\rangle =\langle J'(u^{+}), u^{+}\rangle -\iint_{\R^{2N}} \frac{u^{+}(x)u^{-}(y) + u^{-}(x) u^{+}(y)}{|x-y|^{N+2\alpha}} \, dxdy.
\end{align*}
In particular, for all $w\in \mathcal{M}$ it results
$$
\langle J'(w^{\pm}), w^{\pm} \rangle \leq 0.
$$
Motivated by \cite{AS2, BF}, we will show the existence of a minimizer of $J$ on $\mathcal{M}$ and that it is a weak solution to (\ref{P}) by using a suitable deformation argument.\\
Firstly, we collect some preliminary lemmas which will be fundamental to prove our main result.

\begin{lem}\label{lem4.1}

\noindent
\begin{compactenum}[(i)]
\item For all $u\in \mathcal{N}$ such that $\|u\|\rightarrow +\infty$, then $J(u)\rightarrow +\infty$;
\item There exists $\varrho>0$ such that $\|u\|\geq \varrho$ for all $u\in \mathcal{N}$ and $\|w^{\pm}\|\geq \varrho$ for all $w\in \mathcal{M}$. 
\end{compactenum}
\end{lem}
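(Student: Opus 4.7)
\emph{Part (i).} The key observation is that on $\mathcal{N}$ we have $\|u\|^{2}=\int_{\R^{N}}K(x)f(u)u\,dx$. Using the Ambrosetti--Rabinowitz condition $(f_{3})$, which gives $F(t)\leq \tfrac{1}{\theta}f(t)t$ for every $t\neq 0$, I would plug into $J$:
\begin{equation*}
J(u)=\tfrac{1}{2}\|u\|^{2}-\int_{\R^{N}}K(x)F(u)\,dx\geq \left(\tfrac{1}{2}-\tfrac{1}{\theta}\right)\|u\|^{2}.
\end{equation*}
Since $\theta>2$, the coefficient is strictly positive and so $J(u)\to+\infty$ as $\|u\|\to+\infty$.

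\emph{Part (ii), estimate on $\mathcal{N}$.} I would derive a pointwise estimate for $f$ from the hypotheses at $0$ and at infinity. In the case $(VK_{3})$--$(f_{1})$, for every $\varepsilon>0$ there exists $C_\varepsilon>0$ with
\begin{equation*}
|f(t)t|\leq \varepsilon|t|^{2}+C_{\varepsilon}|t|^{2^{*}_{\alpha}}\qquad\forall\,t\in\R,
\end{equation*}
while in the case $(VK_{4})$--$(\tilde{f}_{1})$ the analogous bound reads
\begin{equation*}
|f(t)t|\leq \varepsilon|t|^{m}+C_{\varepsilon}|t|^{2^{*}_{\alpha}}\qquad\forall\,t\in\R.
\end{equation*}
Multiplying by $K(x)$ and integrating, I would use $(VK_{3})$ (which gives $K\leq CV$) or Lemma \ref{cont} (which gives $\X\hookrightarrow L^{m}_{K}(\R^{N})$), together with $K\in L^{\infty}$ and the Sobolev embedding $\mathcal{D}^{\alpha,2}\hookrightarrow L^{2^{*}_{\alpha}}$, to obtain
\begin{equation*}
\|u\|^{2}=\int_{\R^{N}}K(x)f(u)u\,dx\leq \varepsilon C\|u\|^{p}+C_{\varepsilon}C\|u\|^{2^{*}_{\alpha}},
\end{equation*}
where $p=2$ under $(VK_{3})$ and $p=m>2$ under $(VK_{4})$. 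In the first case, absorbing $\varepsilon C\|u\|^{2}$ into the left hand side yields $\|u\|^{2^{*}_{\alpha}-2}\geq c>0$; in the second case, dividing by $\|u\|^{2}$ gives $1\leq \varepsilon C\|u\|^{m-2}+C_{\varepsilon}C\|u\|^{2^{*}_{\alpha}-2}$, which forces $\|u\|$ to stay away from zero since both exponents are positive. Either way, $\|u\|\geq \varrho$ for some $\varrho>0$.

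\emph{Part (ii), estimate on $w^{\pm}$.} For $w\in\mathcal{M}$ I would exploit the identity already noted in the paper,
\begin{equation*}
\langle J'(w),w^{+}\rangle=\langle J'(w^{+}),w^{+}\rangle-\iint_{\R^{2N}}\frac{w^{+}(x)w^{-}(y)+w^{-}(x)w^{+}(y)}{|x-y|^{N+2\alpha}}\,dx\,dy,
\end{equation*}
and the analogous one for $w^{-}$. Since $w^{+}\geq 0$ and $w^{-}\leq 0$ the double integral is nonpositive, so the condition $\langle J'(w),w^{\pm}\rangle=0$ on $\mathcal{M}$ implies
\begin{equation*}
\|w^{\pm}\|^{2}=\int_{\R^{N}}K(x)f(w^{\pm})w^{\pm}\,dx+\iint_{\R^{2N}}\frac{w^{+}(x)w^{-}(y)+w^{-}(x)w^{+}(y)}{|x-y|^{N+2\alpha}}\,dx\,dy\leq \int_{\R^{N}}K(x)f(w^{\pm})w^{\pm}\,dx.
\end{equation*}
Applying the same growth estimates as above to $w^{\pm}$ (which are nontrivial by definition of $\mathcal{M}$) yields $\|w^{\pm}\|\geq \varrho$ with the same $\varrho$.

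The one step that needs a bit of care is checking, in each of the two scenarios $(VK_{3})$--$(f_{1})$ and $(VK_{4})$--$(\tilde{f}_{1})$, that the growth estimate for $f(t)t$ is compatible with the continuous embedding provided by Lemma \ref{cont}; once that bookkeeping is done the argument closes by simply choosing $\varepsilon$ small.
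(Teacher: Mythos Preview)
Your proposal is correct and follows essentially the same approach as the paper: part (i) uses the Ambrosetti--Rabinowitz condition $(f_{3})$ together with the Nehari constraint to get $J(u)\geq\bigl(\tfrac{1}{2}-\tfrac{1}{\theta}\bigr)\|u\|^{2}$, and part (ii) combines the growth estimates $(f_{1})$/$(\tilde f_{1})$--$(f_{2})$ with the embeddings of Lemma~\ref{cont} (splitting into the $(VK_{3})$ and $(VK_{4})$ cases) to bound $\|u\|$ and $\|w^{\pm}\|$ from below, after observing that the nonpositivity of the cross term yields $\|w^{\pm}\|^{2}\leq\int_{\R^{N}}K(x)f(w^{\pm})w^{\pm}\,dx$. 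The only cosmetic difference is that the paper writes (i) via the identity $J(u)=J(u)-\tfrac{1}{\theta}\langle J'(u),u\rangle$, which amounts to the same computation.
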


\begin{proof}
$(i)$ By using the definition of $\mathcal{N}$ and taking into account the assumption $(f_3)$ we get
\begin{align*}
J(u)&= J(u)-\frac{1}{\theta} \langle J'(u), u\rangle \\
&= \left( \frac{1}{2} - \frac{1}{\theta} \right) \|u\|^{2} - \int_{\R^{N}} K(x) \left[F(u)- \frac{1}{\theta} f(u)u \right] \, dx \\
&= \left( \frac{1}{2} - \frac{1}{\theta} \right) \|u\|^{2} + \frac{1}{\theta} \int_{\R^{N}} K(x) \left[f(u)u - \theta F(u) \right] \, dx \\
&\geq \left( \frac{1}{2} - \frac{1}{\theta} \right) \|u\|^{2}. 
\end{align*}
So, when $\|u\|\rightarrow +\infty$, the last inequality yields $J(u)\rightarrow +\infty$.  
\smallskip

\noindent
$(ii)$ From the assumptions $(f_1)$-$(f_2)$ or $(\tilde{f}_1)$-$(f_2)$ we deduce that, for any $\varepsilon>0$ there exists a positive constant $C_{\varepsilon}$ such that
\begin{align}
&|f(t)t| \leq \varepsilon |t|^{2} + C_{\varepsilon} |t|^{2^{*}_{\alpha}}, \mbox{ for all } t\in \R  \label{4.16} \\
&|f(t)t| \leq \varepsilon |t|^{m}+ C_{\varepsilon} |t|^{2^{*}_{\alpha}}, \mbox{ for all } t\in \R. \label{4.17}
\end{align}
Since $u\in \mathcal{N}$ we have $\langle J'(u), u \rangle =0$, that is
\begin{equation*}
\|u\|^{2} = \int_{\R^{N}} K(x) f(u)u \, dx. 
\end{equation*}
Now we distinguish two cases. \\
Assume that $(VK_3)$ holds. Then, by applying (\ref{4.16}) and Proposition \ref{prop2.2} we get 
\begin{align*}%\label{4.18}
\|u\|^{2} &\leq \left \|\frac{K}{V} \right\|_{L^{\infty}(\R^{N})} \varepsilon \int_{\R^{N}} V(x)|u|^{2} dx + C_{\varepsilon} \int_{\R^{N}} K(x) |u|^{2^{*}_{\alpha}} \, dx \\
& \leq \left \|\frac{K}{V}\right \|_{L^{\infty}(\R^{N})} \varepsilon \|u\|^{2} + C_{\varepsilon} S \|K\|_{L^{\infty}(\R^{N})} \|u\|^{2^{*}_{\alpha}}
\end{align*} 
where $S$ is the Sobolev embedding constant. Choosing $\varepsilon\in (0, 1/\|\frac{K}{V} \|_{L^{\infty}(\R^{N})} )$, we can find $\varrho_{1}>0$ such that $\|u\|\geq \varrho_{1}$. \\ 
Let us suppose that $(VK_4)$ holds true. By (\ref{4.17}) and Proposition \ref{prop2.2} we have
\begin{align}\begin{split}\label{4.19}
\|u\|^{2} &\leq \varepsilon \int_{\R^{N}} K(x)|u|^{m} dx + C_{\varepsilon} \int_{\R^{N}} K(x) |u|^{2^{*}_{\alpha}} \, dx \\ 
& \leq \varepsilon \|u\|^{m} + C_{\varepsilon} S \|K\|_{L^{\infty}(\R^{N})} \|u\|^{2^{*}_{\alpha}}. 
\end{split}\end{align} 
Since $m\in (2, 2^{*}_{\alpha})$, we can choose $\varepsilon$ sufficiently small in order to find $\varrho_{2}>0$ such that $\|u\|\geq \varrho_{2}$.\\
Now, for $w\in \mathcal{M}$, we have that $\langle J'(w), w^{\pm} \rangle =0$, and, observing that
\begin{equation*}
\iint_{\R^{2N}} \frac{w^{+}(x) w^{-}(y)+w^{-}(x) w^{+}(y)}{|x-y|^{N+2\alpha}} \, dxdy \leq 0, 
\end{equation*}
we have
\begin{equation*}
\|w^{\pm}\|^{2} \leq \int_{\R^{N}} K(x) w^{\pm} f(w^{\pm}) \, dx. 
\end{equation*}
Then we can argue as before to prove that there is $\varrho>0$ such that $\|w^{\pm}\|\geq \varrho$. 
\end{proof}

\begin{lem}\label{liminf}
Let $\{w_{n}\} \subset \mathcal{M}$ such that $w_{n}\rightharpoonup w$ in $\mathbb{X}$. Then $w^{\pm}\neq 0$.
\end{lem}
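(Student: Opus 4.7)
The plan is to argue by contradiction. Suppose, up to relabelling, that $w^+ \equiv 0$ (the case $w^- \equiv 0$ is symmetric). I will combine the uniform lower bound $\|w_n^+\| \geq \varrho > 0$ provided by Lemma \ref{lem4.1}(ii) with a compactness argument showing that in fact $\|w_n^+\|^2 \to 0$, yielding a contradiction.

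First I would note that $\{w_n^\pm\}$ is bounded in $\X$. This follows from the decomposition displayed in Section~\ref{TL},
\begin{equation*}
[w_n]^2 = [w_n^+]^2+[w_n^-]^2-\iint_{\R^{2N}} \frac{w_n^+(x)w_n^-(y)+w_n^-(x)w_n^+(y)}{|x-y|^{N+2\alpha}}\,dxdy,
\end{equation*}
since $w_n^+\geq 0$ and $w_n^-\leq 0$ make the double integral nonpositive, so that $[w_n^\pm]^2\leq [w_n]^2$; combined with $\int_{\R^N} V|w_n^\pm|^2\,dx \leq \int_{\R^N} V|w_n|^2\,dx$, this yields $\|w_n^\pm\|\leq \|w_n\|$.

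Next I would upgrade the weak convergence $w_n \rightharpoonup w$ in $\X$ to $w_n^\pm \rightharpoonup w^\pm$ in $\X$. By Proposition \ref{prop2.2}, along a subsequence $w_n\to w$ in $L^q_K(\R^N)$ for an appropriate $q\in(2,2^*_\alpha)$ and therefore a.e.\ in $\R^N$; hence $w_n^\pm\to w^\pm$ a.e. Boundedness in $\X$ and reflexivity then identify $w^\pm$ as the weak limit of $w_n^\pm$. Under our assumption $w^+=0$, we conclude $w_n^+\rightharpoonup 0$ in $\X$.

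The final step uses that $w_n\in\mathcal{M}$. From $\langle J'(w_n), w_n^+\rangle = 0$ together with the identity
\begin{equation*}
\langle J'(w_n), w_n^+\rangle = \langle J'(w_n^+), w_n^+\rangle - \iint_{\R^{2N}} \frac{w_n^+(x)w_n^-(y)+w_n^-(x)w_n^+(y)}{|x-y|^{N+2\alpha}}\,dxdy
\end{equation*}
and the nonpositivity of the last double integral, I obtain $\|w_n^+\|^2 \leq \int_{\R^N} K(x) f(w_n^+)w_n^+\,dx$. Applying Lemma \ref{lem2.1} to the sequence $\{w_n^+\}$ (which weakly converges to $0$ in $\X$) yields $\int_{\R^N} K(x)f(w_n^+)w_n^+\,dx \to 0$, hence $\|w_n^+\|\to 0$, contradicting Lemma \ref{lem4.1}(ii).

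The step I expect to require the most care is identifying the weak limit of $w_n^+$ in $\X$: although the pointwise $a.e.$ convergence of the positive parts is straightforward, one must invoke $\X$-boundedness together with uniqueness of a.e. limits to legitimately pass from $w_n \rightharpoonup w$ to $w_n^+\rightharpoonup w^+$, so that Lemma \ref{lem2.1} can be applied to $\{w_n^+\}$ rather than just to $\{w_n\}$.
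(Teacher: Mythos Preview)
Your argument is correct and follows essentially the same route as the paper: both derive the key inequality $\varrho^{2}\leq \|w_{n}^{\pm}\|^{2}\leq \int_{\R^{N}}K(x)f(w_{n}^{\pm})w_{n}^{\pm}\,dx$ from $w_{n}\in\mathcal{M}$ together with the nonpositivity of the cross term, and then pass to the limit in the right-hand side via the compactness of Lemma~\ref{lem2.1}/Proposition~\ref{prop2.2}. The only cosmetic differences are that the paper argues directly (obtaining $0<\varrho^{2}\leq \int_{\R^{N}}K(x)f(w^{\pm})w^{\pm}\,dx$) rather than by contradiction, and justifies the convergence of the nonlinear term by first passing to strong $L^{m}_{K}$-convergence of $w_{n}^{\pm}$ via the Lipschitz inequality $|t^{\pm}-s^{\pm}|\leq |t-s|$, whereas you first establish $w_{n}^{+}\rightharpoonup w^{+}$ in $\X$ and then invoke Lemma~\ref{lem2.1}.
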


\begin{proof}
Firstly we observe that by Lemma \ref{lem4.1} there exists $\varrho>0$ such that 
\begin{equation}\label{r1}
\|w_{n}^{\pm}\|\geq \varrho \, \mbox{ for all } n\in \N. 
\end{equation}
Since $w_{n} \in \mathcal{M}$, we have $\langle J'(w_{n}), w_{n}^{\pm} \rangle =0$, that is
\begin{equation}\label{r3}
\|w_{n}^{\pm}\|^{2} - \iint_{\R^{2N}} \frac{w_{n}^{+}(x) w_{n}^{-}(y)+w_{n}^{-}(x) w_{n}^{+}(y)}{|x-y|^{N+2\alpha}} \, dxdy = \int_{\R^{N}} K(x) f(w_{n}^{\pm}) w_{n}^{\pm} \, dx. 
\end{equation}
At this point, recalling that
$$
\iint_{\R^{2N}} \frac{w_{n}^{+}(x) w_{n}^{-}(y)+w_{n}^{-}(x) w_{n}^{+}(y)}{|x-y|^{N+2\alpha}} \, dxdy\leq 0,
$$
by (\ref{r1}) and (\ref{r3}) we get
\begin{align}\label{tv}
\varrho^{2} \leq \|w_{n}^{\pm}\|^{2}\leq \int_{\R^{N}} K(x) f(w_{n}^{\pm}) w_{n}^{\pm} \, dx. 
\end{align}
Now, by using $w_{n}\rightharpoonup w$ in $\mathbb{X}$ and Proposition \ref{prop2.2}, we know that $w_{n}\rightarrow w$ in $L^{m}_{K}(\R^{N})$. Then, exploiting $|t^{\pm}-s^{\pm}|\leq |t-s|$ for all $t, s\in \R$, we can deduce that $w_{n}^{\pm}\rightarrow w^{\pm}$ in $L^{m}_{K}(\R^{N})$, and being $K(x)>0$ for all $x\in \R^{N}$, we also have $w_{n}^{\pm}\rightarrow w^{\pm}$ a.e. in $\R^{N}$.
Arguing as in Lemma \ref{lem2.1}, we can see that 
\begin{align}\label{tvb}
\int_{\R^{N}} K(x) f(w_{n}^{\pm}) w_{n}^{\pm} \, dx\rightarrow \int_{\R^{N}} K(x) f(w^{\pm}) w^{\pm} \, dx.
\end{align}
Putting together (\ref{tv}) and (\ref{tvb}), we have
$$
0<\varrho^{2} \leq \int_{\R^{N}} K(x) f(w^{\pm}) w^{\pm} \, dx
$$
which shows that $w^{\pm}\neq 0$.
\end{proof}

\begin{lem}\label{M}
If $v\in \X : v^{\pm} \neq 0$, then there exist $s, t>0$ such that
\begin{equation*}
\langle J'(tv^{+} + sv^{-}), v^{+} \rangle =0 \, \mbox{ and } \, \langle J'(tv^{+} + sv^{-}), v^{-} \rangle =0. 
\end{equation*} 
As a consequence $tv^{+} + sv^{-} \in \mathcal{M}$.  
\end{lem}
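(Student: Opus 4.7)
The plan is to realise the desired pair $(t,s)$ as the coordinates of an interior maximiser of the two-variable function
$$
\psi(t,s) := J(tv^+ + sv^-), \qquad (t,s)\in [0,+\infty)^2,
$$
and then invoke the first-order optimality conditions. Exploiting that $v^+$ and $v^-$ have disjoint supports (so the potential and nonlinear contributions split) together with the identity $[tv^++sv^-]^2 = t^2[v^+]^2+s^2[v^-]^2+2tsB$, where
$$
B := -\iint_{\R^{2N}} \frac{v^+(x)v^-(y)+v^+(y)v^-(x)}{|x-y|^{N+2\alpha}}\,dx\,dy,
$$
I would derive the key decomposition $\psi(t,s) = J(tv^+) + J(sv^-) + tsB$. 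Since $v^+\geq 0$, $v^-\leq 0$ and both are non-trivial, the integrand defining $B$ is strictly negative on a set of positive measure, so $B>0$.

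Next I would show that $\psi$ attains a positive maximum on $[0,+\infty)^2$. By $(f_3)$, $F(\tau)\geq c|\tau|^{\theta}$ for $|\tau|$ large with $\theta>2$, hence both $J(tv^+)$ and $J(sv^-)$ tend to $-\infty$ at super-quadratic rates, dominating the interaction $tsB\leq \tfrac{B}{2}(t^2+s^2)$. Thus $\psi(t,s)\to -\infty$ as $t+s\to +\infty$. On the other hand, $(f_1)$ or $(\tilde f_1)$ and the decomposition yield $\psi(t,s)>0$ for small $t,s>0$. By continuity on $[0,+\infty)^2$, a positive supremum of $\psi$ is attained at some $(t^*,s^*)$.

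The main obstacle is to rule out that $(t^*,s^*)$ lies on the boundary of the quadrant. Assume, for contradiction, $t^*=0$; then $s^*>0$, as otherwise $\psi(0,0)=0$ would not be the maximum. A chain-rule computation gives
$$
\partial_{t}\psi(0,s^*) \;=\; \langle J'(s^*v^-),v^+\rangle \;=\; s^*B - \int_{\R^{N}} K(x)f(s^*v^-)v^+\,dx \;=\; s^*B \;>\; 0,
$$
because $f(s^*v^-)\equiv 0$ on the support of $v^+$ and the $V$-contribution vanishes by $v^+v^-=0$, leaving only the off-diagonal fractional pairing $s^*B$. This contradicts the maximality of $\psi$ at $(0,s^*)$; the case $s^*=0$ is handled symmetrically. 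Consequently $(t^*,s^*)\in (0,+\infty)^2$.

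At this interior critical point, the chain rule gives $\partial_t\psi(t^*,s^*)=\langle J'(t^*v^++s^*v^-),v^+\rangle=0$ and $\partial_s\psi(t^*,s^*)=\langle J'(t^*v^++s^*v^-),v^-\rangle=0$, which is exactly the claim. Setting $w:=t^*v^++s^*v^-$, one has $w^{\pm}\neq 0$, $\langle J'(w),w^{+}\rangle = t^*\langle J'(w),v^+\rangle=0$ and $\langle J'(w),w^{-}\rangle = s^*\langle J'(w),v^-\rangle=0$, and summing gives $\langle J'(w),w\rangle=0$. Therefore $w\in\mathcal{M}$, completing the argument.
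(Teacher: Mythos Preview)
Your argument is correct and follows a genuinely different route from the paper. The paper defines the vector field
\[
G(t,s)=\bigl(\langle J'(tv^{+}+sv^{-}),tv^{+}\rangle,\ \langle J'(tv^{+}+sv^{-}),sv^{-}\rangle\bigr),
\]
establishes the sign conditions $G_1(r,\cdot)>0$, $G_2(\cdot,r)>0$ for $r$ small and $G_1(R,\cdot)<0$, $G_2(\cdot,R)<0$ for $R$ large on a box $[r,R]^2$ (using the growth hypotheses and the Ambrosetti--Rabinowitz condition $(f_3)$, respectively), and then invokes Miranda's theorem to produce a zero in the interior.

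Your approach instead realises $(t,s)$ as an interior global maximiser of $\psi(t,s)=J(tv^{+}+sv^{-})$. The decomposition $\psi(t,s)=J(tv^{+})+J(sv^{-})+tsB$ with $B>0$ is the key structural observation; it reduces the coercivity at infinity to a one-variable question for each of $J(tv^{+})$, $J(sv^{-})$ (handled by $(f_3)$), and it makes the boundary of the quadrant \emph{repulsive}: $\partial_t\psi(0,s)=sB>0$ pushes the maximiser off $\{t=0\}$, and symmetrically for $\{s=0\}$. This is more elementary---no topological degree is needed---and it has the pleasant byproduct that the pair $(t,s)$ you obtain is automatically a global maximiser of $h^{v}$, which is precisely the content of Lemma~\ref{lemab}(a) when $v\in\mathcal M$. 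The paper's Miranda argument, by contrast, only yields existence of a zero without any maximality information, and the maximality in Lemma~\ref{lemab}(a) must then be established separately. On the other hand, the Miranda route is the more standard one in the nodal-solution literature and generalises readily to settings where the functional lacks the clean splitting used here.
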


\begin{proof}
Let $G: (0, +\infty) \times (0, +\infty) \rightarrow \R^{2}$ be a continuous vector field given by 
\begin{equation*}
G(t, s) = \left( \langle J'(tv^{+} + sv^{-}), tv^{+} \rangle, \langle J'(tv^{+} + sv^{-}), sv^{-} \rangle \right)
\end{equation*} 
for every $t, s \in (0, +\infty) \times (0, +\infty)$. 
We distinguish two cases. \\
Assume that $(VK_3)$ holds. By using (\ref{4.16}) and Proposition \ref{prop2.2} we have
\begin{align}\label{piero}
\langle J'(tv^{+} + sv^{-}), tv^{+} \rangle &= t^{2} \|v^{+}\|^{2} - st \iint_{\R^{2N}} \frac{v^{+}(x) v^{-}(y)+v^{-}(x) v^{+}(y)}{|x-y|^{N+2\alpha}} \, dxdy - \int_{\R^{N}} K(x) t v^{+} f(tv^{+})\, dx \nonumber \\
&\geq t^{2} \|v^{+}\|^{2} - \int_{\R^{N}} K(x) t v^{+} f(tv^{+})\, dx \nonumber \\
&\geq t^{2} \|v^{+}\|^{2} - \varepsilon \left\|\frac{K}{V} \right\|_{L^{\infty}(\R^{N})} t^{2} \|v^{+}\|^{2} - C_{\varepsilon} t^{2^{*}_{\alpha}} \|K\|_{L^{\infty}(\R^{N})} \|v^{+}\|^{2^{*}_{\alpha}} \nonumber \\
&= \left(1- \varepsilon \left\|\frac{K}{V} \right\|_{L^{\infty}(\R^{N})}\right) t^{2} \|v^{+}\|^{2} - C_{\varepsilon} t^{2^{*}_{\alpha}} \|K\|_{L^{\infty}(\R^{N})} \|v^{+}\|^{2^{*}_{\alpha}}. 
\end{align}  
Suppose that $(VK_4)$ holds. Then by (\ref{4.17}) and Proposition \ref{prop2.2} we deduce 
\begin{align}\label{angela}
\langle J'(tv^{+} + sv^{-}), tv^{+} \rangle &= t^{2} \|v^{+}\|^{2} - st \iint_{\R^{2N}} \frac{v^{+}(x) v^{-}(y)+v^{-}(x) v^{+}(y)}{|x-y|^{N+2\alpha}} \, dxdy - \int_{\R^{N}} K(x) t v^{+} f(tv^{+})\, dx \nonumber \\
&\geq t^{2} \|v^{+}\|^{2} - \varepsilon t^{m} \|v^{+}\|^{m} - C_{\varepsilon} t^{2^{*}_{\alpha}} \|K\|_{L^{\infty}(\R^{N})} \|v^{+}\|^{2^{*}_{\alpha}}.
\end{align} 
Similarly, we can see that
\begin{align}\label{piero1}
\langle J'(tv^{+} + sv^{-}), sv^{-} \rangle \geq \left(1- \varepsilon \left\|\frac{K}{V} \right\|_{L^{\infty}(\R^{N})}\right) s^{2} \|v^{-}\|^{2} - C_{\varepsilon} s^{2^{*}_{\alpha}} \|K\|_{L^{\infty}(\R^{N})} \|v^{-}\|^{2^{*}_{\alpha}}
\end{align}
if $(VK_3)$ holds, and 
\begin{align}\label{angela1}
\langle J'(tv^{+} + sv^{-}), sv^{-} \rangle \geq s^{2} \|v^{-}\|^{2} - \varepsilon s^{m} \|v^{-}\|^{m} - C_{\varepsilon} s^{2^{*}_{\alpha}} \|K\|_{L^{\infty}(\R^{N})} \|v^{-}\|^{2^{*}_{\alpha}}
\end{align} 
under the condition $(VK_4)$. \\
Then, taking into account \eqref{piero}, \eqref{angela}, \eqref{piero1} and \eqref{angela1}, there exists $r>0$ small enough such that 
\begin{equation}\label{TWW1} 
\langle J'(rv^{+} +sv^{-}), rv^{+}\rangle >0 \mbox{ for all } s>0
\quad \mbox{ and } \quad
\langle J'(tv^{+} +rv^{-}), rv^{-}\rangle >0 \mbox{ for all } t>0.
\end{equation} 

\noindent
Taking into account $(f_3)$ and $\theta \in (2, 2^{*}_{\alpha})$, we get 
\begin{align*}
&\langle J'(tv^{+} + sv^{-}), tv^{+} \rangle \\
&= t^{2} \|v^{+}\|^{2} - st \iint_{\R^{2N}} \frac{v^{+}(x) v^{-}(y)+v^{-}(x) v^{+}(y)}{|x-y|^{N+2\alpha}} \, dxdy - \int_{\R^{N}} K(x) t v^{+} f(tv^{+})\, dx \\
&\leq t^{2} \|v^{+}\|^{2} - st \iint_{\R^{2N}} \frac{v^{+}(x) v^{-}(y)+v^{-}(x) v^{+}(y)}{|x-y|^{N+2\alpha}} \, dxdy - t^{\theta} C_1 \int_{A^{+}} K(x) |v^{+}|^{\theta}\, dx +C_{2}\|K\|_{L^{\infty}(\R^{N})} |A^{+}|
\end{align*}
and
\begin{align*}
&\langle J'(tv^{+} + sv^{-}), sv^{-} \rangle \\
&\leq s^{2} \|v^{-}\|^{2} - st \iint_{\R^{2N}} \frac{v^{+}(x) v^{-}(y)+v^{-}(x) v^{+}(y)}{|x-y|^{N+2\alpha}} \, dxdy - s^{\theta} C_1 \int_{A^{-}} K(x) |v^{-}|^{\theta}\, dx + C_{2}\|K\|_{L^{\infty}(\R^{N})} |A^{-}|
\end{align*}
where $A^{+}\subset \supp(w^{+})$ and $A^{-}\subset \supp(w^{-})$ are measurable sets with finite and positive measures.
Thus, there exists $R>0$ sufficiently large such that for all $t, s\in [r, R]$ it holds
\begin{equation}\label{TWW2}
\langle J'(Rv^{+} + sv^{-}), Rv^{+} \rangle<0 \, \mbox{ and } \, \langle J'(tv^{+} + R v^{-}), R v^{-} \rangle<0. 
\end{equation}
From \eqref{TWW1} and \eqref{TWW2}, and by applying Miranda's Theorem \cite{Miranda}, we can conclude the proof of Lemma \ref{M}. 
\end{proof}

\noindent
For each $v\in \X$ with $v^{\pm}\neq 0$, let us consider the function $h^{v}: [0, +\infty) \times [0, +\infty) \rightarrow \R$ given by
\begin{equation}\label{hv}
h^{v}(t, s) = J(tv^{+} + sv^{-})
\end{equation}
and its gradient $\Phi^{v}: [0, +\infty) \times [0, +\infty) \rightarrow \R^{2}$ defined by
\begin{align}\label{Phiv}
\Phi^{v}(t, s) &= \left ( \Phi_{1}^{v}(t, s) , \Phi_{2}^{v}(t, s) \right) \nonumber \\
&= \left(\frac{\partial h^{v}}{\partial t}(t, s), \frac{\partial h^{v}}{\partial s}(t, s)  \right) \nonumber \\
&= \left(\langle J'(tv^{+}+sv^{-}), v^{+}\rangle , \langle J'(tv^{+}+sv^{-}), v^{-}\rangle  \right). 
\end{align}
Furthermore, we consider the Jacobian matrix of $\Phi^{v}$, namely 
$$
(\Phi^{v})'(t,s)=
\begin{pmatrix}
\frac{\partial \Phi_{1}^{v}}{\partial t}(t,s) & \frac{\partial \Phi_{1}^{v}}{\partial s}(t,s)\\ 
\\
\frac{\partial \Phi_{2}^{v}}{\partial t}(t,s) & \frac{\partial \Phi_{2}^{v}}{\partial s}(t,s)
\end{pmatrix}.
$$

\noindent
In the next result we prove that, if $w\in \mathcal{M}$, the function $h^{w}$ defined in \eqref{hv} has a critical point and in particular a global maximum in $(t, s)=(1,1)$.

\begin{lem}\label{lemab} 
If $w\in \mathcal{M}$, then
\begin{compactenum}[(a)]
\item $h^{w}(t, s) < h^{w}(1,1)=J(w)$, for all $t, s\geq 0$ such that $(t,s)\neq (1,1)$; 
\item $\det(\Phi^{w})' (1,1)>0$. 
\end{compactenum}
\end{lem}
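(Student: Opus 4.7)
The plan is to exploit the identities coming from $\langle J'(w),w^{\pm}\rangle=0$ to rewrite $h^{w}$ in a form from which both assertions of the lemma become transparent. Since $w^{+}$ and $w^{-}$ have disjoint supports, a direct expansion using the decomposition of $J$ recalled at the start of this section gives
\[
h^{w}(t,s)=\tfrac{t^{2}}{2}\|w^{+}\|^{2}+\tfrac{s^{2}}{2}\|w^{-}\|^{2}-tsD-\int_{\R^{N}}K(x)F(tw^{+})\,dx-\int_{\R^{N}}K(x)F(sw^{-})\,dx,
\]
with $D:=\iint_{\R^{2N}}\tfrac{w^{+}(x)w^{-}(y)+w^{-}(x)w^{+}(y)}{|x-y|^{N+2\alpha}}\,dxdy$. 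Since $w^{\pm}\not\equiv 0$ and sit on opposite sides of $0$, the integrand is pointwise $\le 0$ on a set of positive $(x,y)$-measure, so $D<0$ strictly. Moreover $\langle J'(w),w^{\pm}\rangle=0$ translates, after the same cross-term computation, into the Nehari-type identities $\|w^{\pm}\|^{2}-D=\int_{\R^{N}}K(x)f(w^{\pm})w^{\pm}\,dx$, which I will substitute back into the formula for $h^{w}$.

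For part (a), substituting these Nehari identities and collecting the $D$-terms into a perfect square yields
\[
h^{w}(t,s)=\tfrac{(t-s)^{2}}{2}D+\int_{\R^{N}}K\varphi_{+}(t;x)\,dx+\int_{\R^{N}}K\varphi_{-}(s;x)\,dx,\qquad \varphi_{\pm}(r;x):=\tfrac{r^{2}}{2}f(w^{\pm})w^{\pm}-F(rw^{\pm}),
\]
so in particular $h^{w}(1,1)=\int K\varphi_{+}(1)+\int K\varphi_{-}(1)$. A pointwise analysis of $\varphi_{\pm}$ is then the crux: its $r$-derivative equals $w^{\pm}[rf(w^{\pm})-f(rw^{\pm})]$, and the strict monotonicity of $t\mapsto f(t)/|t|$ supplied by $(f_{4})$ forces it to be positive on $[0,1)$ and negative on $(1,+\infty)$ whenever $w^{\pm}(x)\neq 0$. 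Combined with $\varphi_{\pm}(0;x)=0<\varphi_{\pm}(1;x)$ (the latter strict by $(f_{3})$), this gives $\varphi_{\pm}(r;x)\le \varphi_{\pm}(1;x)$ with equality iff $r=1$ or $w^{\pm}(x)=0$. Since $\{w^{\pm}\neq 0\}$ both have positive measure and $-D>0$, the identity
\[
h^{w}(1,1)-h^{w}(t,s)=-\tfrac{(t-s)^{2}}{2}D+\int K[\varphi_{+}(1)-\varphi_{+}(t)]\,dx+\int K[\varphi_{-}(1)-\varphi_{-}(s)]\,dx
\]
displays the difference as a sum of three nonnegative quantities, at least one of which is strictly positive whenever $(t,s)\neq(1,1)$ (if $t\neq 1$ the second term is strict, if $s\neq 1$ the third, and if $t\neq s$ the first). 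This yields $h^{w}(t,s)<h^{w}(1,1)=J(w)$.

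For part (b), direct differentiation of $\Phi^{w}$ gives $\partial_{s}\Phi^{w}_{1}(1,1)=\partial_{t}\Phi^{w}_{2}(1,1)=-D$ and the diagonal entries $a=\|w^{+}\|^{2}-\int K(w^{+})^{2}f'(w^{+})\,dx$ and $d=\|w^{-}\|^{2}-\int K(w^{-})^{2}f'(w^{-})\,dx$. Invoking the Nehari identities once more rewrites them as $a=D-A^{+}$ and $d=D-A^{-}$ with
\[
A^{\pm}:=\int_{\R^{N}}K(x)(w^{\pm})^{2}\Bigl[f'(w^{\pm})-\frac{f(w^{\pm})}{w^{\pm}}\Bigr]\,dx>0,
\]
the strict positivity following from \eqref{vince}, which is equivalent to $f'(t)>f(t)/t$ for $t\neq 0$. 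Therefore
\[
\det(\Phi^{w})'(1,1)=ad-D^{2}=(D-A^{+})(D-A^{-})-D^{2}=-D(A^{+}+A^{-})+A^{+}A^{-}>0,
\]
since $-D>0$ and $A^{\pm}>0$. The most delicate step in the plan is the pointwise monotonicity analysis of $\varphi_{\pm}$, especially on the negative axis: one has to check that $rw^{-}<w^{-}<0$ for $r>1$ together with the strict monotonicity of $t\mapsto f(t)/|t|$ indeed forces $rf(w^{-})-f(rw^{-})$ to change sign exactly at $r=1$ with the correct orientation. Once this is in hand, both conclusions of the lemma reduce to the same algebraic rewriting of $h^{w}$ driven by the Nehari-type identities.
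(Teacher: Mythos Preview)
Your argument is correct. For part~(b) it is essentially the paper's computation, only with cleaner bookkeeping via the quantities $A^{\pm}$; the paper likewise substitutes the identities $\|w^{\pm}\|^{2}=D+\int K f(w^{\pm})w^{\pm}$ into the diagonal entries and expands the determinant.

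For part~(a), however, you take a genuinely different route. The paper argues indirectly: it first shows $h^{w}\to-\infty$ at infinity, so a global maximum $(\bar t,\bar s)$ exists; then rules out the boundary $\bar t=0$ or $\bar s=0$ by a contradiction using $\langle J'(w^{\pm}),w^{\pm}\rangle<0$; then shows $\bar t,\bar s\le 1$ by comparing Nehari identities; and finally checks $h^{w}(\bar t,\bar s)<h^{w}(1,1)$ on $[0,1]^{2}\setminus\{(1,1)\}$ via \eqref{4.23}. Your approach bypasses all of this with a single algebraic identity: after substituting the Nehari relations, $h^{w}(1,1)-h^{w}(t,s)$ is displayed as a sum of three manifestly nonnegative terms, and strictness is read off directly. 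This is shorter and more transparent, and in fact makes the coercivity step and the separate boundary analysis unnecessary. The ``delicate step'' you flag for $w^{-}$ goes through without trouble: for $r\in(0,1)$ one has $w^{-}<rw^{-}<0$, hence $f(rw^{-})/|rw^{-}|>f(w^{-})/|w^{-}|$ by the monotonicity from $(f_{4})$, i.e.\ $f(rw^{-})>rf(w^{-})$, so $w^{-}[rf(w^{-})-f(rw^{-})]>0$; the case $r>1$ reverses all inequalities. Thus $\varphi_{-}$ indeed has its strict pointwise maximum at $r=1$ on $\{w^{-}\ne 0\}$, matching the $\varphi_{+}$ analysis.
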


\begin{proof}
$(a)$ Since $w\in \mathcal{M}$, then $\langle J'(w), w^{\pm} \rangle =0$, that is 
\begin{align*}
&\|w^{+}\|^{2}- \iint_{\R^{2N}} \frac{w^{+}(x) w^{-}(y)+w^{-}(x) w^{+}(y)}{|x-y|^{N+2\alpha}} \, dxdy = \int_{\R^{N}} K(x) w^{+} f(w^{+})\, dx \\
&\|w^{-}\|^{2}- \iint_{\R^{2N}} \frac{w^{+}(x) w^{-}(y)+w^{-}(x) w^{+}(y)}{|x-y|^{N+2\alpha}} \, dxdy = \int_{\R^{N}} K(x) w^{-} f(w^{-})\, dx. 
\end{align*}
From this and by the definition of $\Phi^{w}$, it follows that $(1,1)$ is a critical point of $h^{w}$. \\
Thus, by using $(f_{3})$ and $F(t)\geq 0$, we have
\begin{align*}
h^{w}(t,s) &= J(tw^{+} + sw^{-}) \\
&\leq \frac{1}{2} \|tw^{+} + sw^{-} \|^{2}- \int_{A^{+}} K(x) F(tw^{+})\,dx- \int_{A^{-}} K(x) F(sw^{-})\,dx\\
&\leq \frac{t^{2}}{2} \|w^{+}\|^{2}+\frac{s^{2}}{2} \|w^{-}\|^{2}- st \iint_{\R^{2N}} \frac{w^{+}(x)w^{-}(y)+w^{+}(y)w^{-}(x)}{|x-y|^{N+2\alpha}} \,dx dy \\
&- C_{1} t^{\theta} \int_{A^{+}} K(x) |w^{+}|^{\theta} dx- C_{1} s^{\theta} \int_{A^{-}} K(x) |w^{-}|^{\theta} dx + C_{2}\|K\|_{L^{\infty}(\R^{N})} (|A^{+}|+|A^{-}|), 
\end{align*}
where $A^{+}\subset \supp(w^{+})$ and $A^{-}\subset \supp(w^{-})$ are measurable sets with finite and positive measures.
Taking into account that $\theta>2$, we can infer that   
\begin{equation*}
\lim_{|(t, s)|\rightarrow +\infty} h^{w}(t,s)= -\infty. 
\end{equation*}
By using the continuity of $h^{w}$ we can deduce the existence of $(\bar{t}, \bar{s})\in [0, +\infty) \times [0, +\infty)$ that is a global maximum point of $h^{w}$. \\
Now we prove that $\bar{t}, \bar{s}>0$. Suppose by contradiction that $\bar{s}=0$. Then $\langle J'(\bar{t}w^{+}), \bar{t}w^{+} \rangle =0$, that is 
\begin{equation}\label{4.26}
\|w^{+}\|^{2} = \int_{\R^{N}} K(x) (w^{+})^{2} \frac{f(\bar{t}w^{+})}{\bar{t}w^{+}}\, dx. 
\end{equation}
Since $w\in \mathcal{M}$, we get
\begin{align*}
\langle J'(w^{+}), w^{+} \rangle &= \langle J'(w), w^{+} \rangle + \iint_{\R^{2N}} \frac{w^{+}(x) w^{-}(y)+w^{-}(x) w^{+}(y)}{|x-y|^{N+2\alpha}} \, dxdy \\
&=\iint_{\R^{2N}} \frac{w^{+}(x) w^{-}(y)+w^{-}(x) w^{+}(y)}{|x-y|^{N+2\alpha}} \, dxdy<0
\end{align*}
which implies that
\begin{equation}\label{4.27}
\|w^{+}\|^{2} < \int_{\R^{N}} K(x) (w^{+})^{2} \frac{f(w^{+})}{w^{+}}\, dx. 
\end{equation}
Then, combining (\ref{4.26}) and (\ref{4.27}) we obtain
\begin{equation*}
0< \int_{\R^{N}} K(x) \left[ \frac{f(w^{+})}{w^{+}} - \frac{f(\bar{t}w^{+})}{\bar{t} w^{+}} \right] (w^{+})^{2} dx
\end{equation*}
that, in view of $(f_4)$, yields $\bar{t}\leq 1$. Taking into account (\ref{4.23}) we can infer
\begin{align*}
h^{w}(\bar{t}, 0)&= J(\bar{t}w^{+}) \\
&= J(\bar{t}w^{+}) - \frac{1}{2} \langle J'(\bar{t} w^{+}), \bar{t} w^{+} \rangle \\
&= \int_{\R^{N}} K(x) \left[\frac{1}{2} \bar{t} w^{+} f(\bar{t} w^{+})  - F(\bar{t} w^{+})\right]\, dx\\
&\leq \int_{\R^{N}} K(x) \left[\frac{1}{2} \bar{t} w^{+} f(\bar{t} w^{+})  - F(\bar{t} w^{+})\right]\, dx  + \int_{\R^{N}} K(x) \left[\frac{1}{2} \bar{t} w^{-} f(\bar{t} w^{-})  - F(\bar{t} w^{-})\right]\, dx \\
&=J(w) - \frac{1}{2} \langle J'(w), w \rangle \\
&=J(w) = h^{w}(1,1). 
\end{align*}
Then $h^{w}(\bar{t}, 0) < h^{w}(1,1)$, and this gives a contradiction because $(\bar{t}, 0)$ is a global maximum point. In similar fashion we can prove that $\bar{t}>0$. \\
Now we show that $\bar{s}, \bar{t}\leq1$. Since $(h^{w})'(\bar{t}, \bar{s})=0$, we get
\begin{align*}
&\bar{t}^{2}\|w^{+}\|^{2}- \bar{s}\bar{t} \iint_{\R^{2N}} \frac{w^{+}(x) w^{-}(y)+w^{-}(x) w^{+}(y)}{|x-y|^{N+2\alpha}} \, dxdy = \int_{\R^{N}} K(x) \bar{t}w^{+} f(\bar{t}w^{+})\, dx \\
&\bar{s}^{2}\|w^{-}\|^{2}- \bar{s}\bar{t} \iint_{\R^{2N}} \frac{w^{+}(x) w^{-}(y)+w^{-}(x) w^{+}(y)}{|x-y|^{N+2\alpha}} \, dxdy = \int_{\R^{N}} K(x) \bar{s} w^{-} f(\bar{s}w^{-})\, dx. 
\end{align*}
Assume that $\bar{t}\geq \bar{s}$. Since 
$$
\iint_{\R^{2N}} \frac{w^{+}(x) w^{-}(y)+w^{-}(x) w^{+}(y)}{|x-y|^{N+2\alpha}} \, dxdy \leq0
$$
we have 
\begin{align}\label{ny}
\bar{t}^{2}\|w^{+}\|^{2}- \bar{t}^{2} \iint_{\R^{2N}} \frac{w^{+}(x) w^{-}(y)+w^{-}(x) w^{+}(y)}{|x-y|^{N+2\alpha}} \, dxdy \geq \int_{\R^{N}} K(x) \bar{t}w^{+} f(\bar{t}w^{+})\, dx. 
\end{align}
Since $\langle J'(w), w^{+}\rangle =0$ ($w\in \mathcal{M}$), we deduce that
\begin{align*}
\|w^{+}\|^{2}- \iint_{\R^{2N}} \frac{w^{+}(x) w^{-}(y)+w^{-}(x) w^{+}(y)}{|x-y|^{N+2\alpha}} \, dxdy = \int_{\R^{N}} K(x) w^{+} f(w^{+})\, dx
\end{align*}
which together with (\ref{ny}) gives
\begin{equation*}
0\geq \int_{\R^{N}} K(x) \left[\frac{f(\bar{t} w^{+})}{\bar{t}w^{+}} - \frac{f(w^{+})}{w^{+}} \right]\, dx.
\end{equation*}
By $(f_4)$ we can infer that $\bar{t}\leq 1$. \\
Now we aim to prove that $h^{w}$ does not assume a global maximum in $[0,1]\times[0,1]\setminus \{(1,1)\}$, namely
\begin{equation*}
h^{w}(\bar{t}, \bar{s}) < h^{w}(1,1) \, \mbox{ for every }(\bar{t}, \bar{s})\in [0,1]\times[0,1]\setminus \{(1,1)\}.   
\end{equation*}
Let us observe that by the linearity of $F$ and the positivity of $K$ it follows that
\begin{equation*}
\int_{\R^{N}} K(x) F(w) \, dx = \int_{\R^{N}} K(x) (F(w^{+}) + F(w^{-}) ) \, dx.
\end{equation*}
Then, by the definition of $h^{w}$ and (\ref{4.23}) we get
\begin{align*}
h^{w}(\bar{t}, \bar{s}) &= J(\bar{t} w^{+} + \bar{s} w^{-}) -\frac{1}{2} \langle J'(\bar{t} w^{+} + \bar{s} w^{-}), \bar{t} w^{+}\rangle -\frac{1}{2} \langle J'(\bar{t} w^{+} + \bar{s} w^{-}), \bar{s} w^{-}\rangle \\
&=\frac{\bar{t}^{2}}{2} \|w^{+}\|^{2} + \frac{\bar{s}^{2}}{2} \|w^{-}\|^{2} - \bar{s}\bar{t} \iint_{\R^{2N}} \frac{w^{+}(x) w^{-}(y)+w^{-}(x) w^{+}(y)}{|x-y|^{N+2\alpha}} \, dxdy - \int_{\R^{N}} K(x) F(\bar{t}w^{+})\, dx \\
&- \int_{\R^{N}} K(x) F(\bar{s}w^{-})\, dx - \frac{\bar{t}^{2}}{2} \|w^{+}\|^{2}-\frac{\bar{s}^{2}}{2} \|w^{-}\|^{2} +\bar{s}\bar{t} \iint_{\R^{2N}} \frac{w^{+}(x) w^{-}(y)+w^{-}(x) w^{+}(y)}{|x-y|^{N+2\alpha}} \, dxdy \\
&+ \frac{1}{2} \int_{\R^{N}} K(x) \bar{t} w^{+} f(\bar{t} w^{+}) \, dx + \frac{1}{2} \int_{\R^{N}} K(x) \bar{s} w^{-} f(\bar{s} w^{-}) \, dx \\
&= \frac{1}{2} \int_{\R^{N}} K(x) [\bar{t} w^{+} f(\bar{t}w^{+}) - F(\bar{t} w^{+})]\, dx +\frac{1}{2} \int_{\R^{N}} K(x) [\bar{s} w^{-} f(\bar{s}w^{-}) - F(\bar{s} w^{-})]\, dx \\
&<\frac{1}{2} \int_{\R^{N}} K(x) [w^{+} f(w^{+}) - F(w^{+})]\, dx +\frac{1}{2} \int_{\R^{N}} K(x) [w^{-} f(w^{-}) - F(w^{-})]\, dx \\
&=\frac{1}{2} \int_{\R^{N}} K(x) [w f(w) - F(w)]\, dx = h^{w}(1,1).  
\end{align*}

\noindent
$(b)$ Firstly, let us observe that
\begin{align}\begin{split}\label{matrix}
&\frac{\partial \Phi_{1}^{w}}{\partial t} (t,s)= \|w^{+}\|^{2} - \int_{\R^{N}} K(x) f'(tw^{+}) (w^{+})^{2}\, dx \\
&\frac{\partial \Phi_{2}^{w}}{\partial s} (t,s)= \|w^{-}\|^{2} - \int_{\R^{N}} K(x) f'(sw^{-}) (w^{-})^{2}\, dx\\
&\frac{\partial \Phi_{1}^{w}}{\partial s} (t,s)= \frac{\partial \Phi_{2}^{w}}{\partial t} (t,s)=- \iint_{\R^{2N}} \frac{w^{+}(x)w^{-}(y)+w^{-}(x) w^{+}(y)}{|x-y|^{N+2\alpha}}\, dxdy. 
\end{split}\end{align}
Then, by using the fact that $w\in \mathcal{M}$, (\ref{matrix}) and (\ref{vince}) we have
\begin{align*}
\det(\Phi^{w})'(1,1)&= \left[\|w^{+}\|^{2} - \int_{\R^{N}} f'(w^{+})(w^{+})^{2} \, dx\right] \left[\|w^{-}\|^{2} - \int_{\R^{N}} f'(w^{-}) (w^{-})^{2} \, dx\right] \\
&- \left(\iint_{\R^{2N}} \frac{w^{+}(x)w^{-}(y)+w^{-}(x) w^{+}(y)}{|x-y|^{N+2\alpha}}\, dxdy\right)^{2} \\
&= \left[ \int_{\R^{N}} \left(w^{+} f(w^{+}) - f'(w^{+}) (w^{+})^{2}\right) \, dx + \iint_{\R^{2N}} \frac{w^{+}(x)w^{-}(y)+w^{-}(x) w^{+}(y)}{|x-y|^{N+2\alpha}}\, dxdy \right]\\
&\times \left[ \int_{\R^{N}} \left(w^{-} f(w^{-}) - f'(w^{-}) (w^{-})^{2}\right) \, dx + \iint_{\R^{2N}} \frac{w^{+}(x)w^{-}(y)+w^{-}(x) w^{+}(y)}{|x-y|^{N+2\alpha}}\, dxdy \right]\\
&- \left(\iint_{\R^{2N}} \frac{w^{+}(x)w^{-}(y)+w^{-}(x) w^{+}(y)}{|x-y|^{N+2\alpha}}\, dxdy\right)^{2}>0. 
\end{align*}
\end{proof}

\section{Proof of Theorem \ref{thm}}\label{MRT}

\noindent
In this section we will prove the existence of $w\in \mathcal{M}$ in which the infimum of $J$ is achieved on $\mathcal{M}$. Then, by using a quantitative deformation lemma, we show that $w$ is a critical point of $J$, so a sign-changing solution of (\ref{P}). 

\noindent
By using Lemma \ref{lem4.1} we know that there exists a minimizing sequence $\{w_{n}\}_{n}\subset \mathcal{M}$, bounded in $\X$, such that 
\begin{equation}\label{t00}
J(w_{n})\rightarrow \inf_{v\in \mathcal{M}} J(v)=:c_{0}>0.
\end{equation}
By using Proposition \ref{prop2.2} we can assume that 
\begin{align*}
&w_{n}^{\pm}\rightharpoonup w^{\pm}\, \mbox{ in } \X, \\
&w_{n}^{\pm}\rightarrow w^{\pm} \, \mbox{ in } L^{m}_{K}(\R^{N}), \\ 
&w_{n}^{\pm}\rightarrow w^{\pm} \, \mbox{ a.e. in } \R^{N}.
\end{align*}
From Lemma \ref{liminf} we deduce that $w^{\pm}\neq 0$, so $w= w^{+}+w^{-}$ is sign-changing. By Lemma \ref{M}, there exist $s, t>0$ such that 
\begin{equation}\label{t0}
\langle J'(tw^{+}+sw^{-}), w^{+} \rangle =0, \, \langle J'(tw^{+}+sw^{-}), w^{-} \rangle =0
\end{equation}
and $tw^{+}+sw^{-} \in \mathcal{M}$. Now, we prove that $s,t \leq 1$. Since $w_{n} \in \mathcal{M}$, we have $\langle J'(w_{n}), w_{n}^{\pm}\rangle =0$ or equivalently 
\begin{align}
&\|w_{n}^{+}\|^{2} - \iint_{\R^{2N}} \frac{w^{+}(x) w^{-}(y)+w^{-}(x) w^{+}(y)}{|x-y|^{N+2\alpha}} \, dxdy = \int_{\R^{N}} K(x) w_{n}^{+} f(w_{n}^{+})\, dx \label{a1} \\
&\|w_{n}^{-}\|^{2} - \iint_{\R^{2N}} \frac{w^{+}(x) w^{-}(y)+w^{-}(x) w^{+}(y)}{|x-y|^{N+2\alpha}} \, dxdy = \int_{\R^{N}} K(x) w_{n}^{-} f(w_{n}^{-})\, dx. \label{a2}
\end{align}
The weak lower semicontinuity of the norm $\|\cdot \|$ in $\X$ yields
\begin{equation}\label{t1}
\|w^{\pm}\|^{2}\leq \liminf_{n\rightarrow \infty} \|w_{n}^{\pm}\|^{2},
\end{equation} 
and arguing as in Lemma \ref{lem2.1}, we obtain 
\begin{equation}\label{t2}
\int_{\R^{N}} K(x) f(w_{n}^{\pm}) w_{n}^{\pm} \, dx \rightarrow \int_{\R^{N}} K(x) f(w^{\pm}) w^{\pm} \, dx. 
\end{equation}
Taking into account (\ref{a1}), (\ref{a2}), (\ref{t1}), (\ref{t2}), and by applying Fatou's lemma we deduce
\begin{align}\label{t3}
\langle J'(w), w^{+} \rangle \leq 0 \, \mbox{ and } \, \langle J'(w), w^{-} \rangle \leq 0.  
\end{align}     
Then, putting together (\ref{t0}) and (\ref{t3}), and arguing as in the proof of Lemma \ref{lemab}-$(a)$ we deduce that $s, t\leq 1$. Next, we show that $J(tw^{+}+sw^{-})=c_{0}$ and $t=s=1$. \\
By using $tw^{+}+sw^{-} \in \mathcal{M}$, $w_{n}\in \mathcal{M}$, (\ref{4.23}), (\ref{t00}) and $s, t\in (0,1]$ we can see 
\begin{align*}
c_{0}&\leq J(tw^{+} + sw^{-}) \\
&= J(tw^{+} + sw^{-})- \frac{1}{2} \langle J'(tw^{+} + sw^{-}), tw^{+} + sw^{-} \rangle \\
&=\int_{\R^{N}} K(x)\left[\frac{1}{2} f(tw^{+} + sw^{-})(tw^{+} + sw^{-})-F(tw^{+} + sw^{-})\right]dx \\
&=\int_{\R^{N}} K(x)\left[\frac{1}{2} f(tw^{+})(tw^{+})-F(tw^{+})\right]dx+\int_{\R^{N}} K(x)\left[\frac{1}{2} f(sw^{-})(sw^{-})-F(sw^{-})\right]dx \\
&\leq \int_{\R^{N}} K(x)\left[\frac{1}{2} f(w^{+})(w^{+})-F(w^{+})\right]dx+\int_{\R^{N}} K(x)\left[\frac{1}{2} f(w^{-})(w^{-})-F(w^{-})\right]dx \\
&=\lim_{n\rightarrow \infty} \left\{\int_{\R^{N}} K(x)\left[\frac{1}{2} f(w_{n}^{+})(w_{n}^{+})-F(w_{n}^{+})\right]dx+\int_{\R^{N}} K(x)\left[\frac{1}{2} f(w_{n}^{-})(w_{n}^{-})-F(w_{n}^{-})\right]dx \right\}\\
&=\lim_{n\rightarrow \infty} \int_{\R^{N}} K(x)\left[\frac{1}{2} f(w_{n})(w_{n})-F(w_{n}) \right] dx \\
&=\lim_{n\rightarrow \infty} \left[J(w_{n})-\frac{1}{2}\langle J'(w_{n}), w_{n}\rangle \right]\\
&= \lim_{n\rightarrow \infty} J(w_{n})= c_{0}.
\end{align*}        
Thus, we have proved that there exist $t, s\in (0, 1]$ such that $tw^{+} + sw^{-}\in \mathcal{M}$ and $J(tw^{+} + sw^{-})=c_{0}$. Let us observe that by the above calculation we can infer that $t=s=1$, so $w=w^{+} + w^{-}\in \mathcal{M}$ and $J(w^{+} + w^{-})=c_{0}$. \\
Finally we prove that $w$ is a critical point of $J$, that is $J'(w)=0$. We argue by contradiction. Then, we can find a positive constant $\beta>0$ and $v_{0} \in \X$ with $\|v_{0}\|=1$, such that $\langle J'(w), v_{0} \rangle =2\beta >0$. By the continuity of $J'$, we can choose a radius $R$ so that $\langle J'(v), v_{0} \rangle \geq \beta$ for every $v \in \B_{R}(w) \subset \X$ with $v^{\pm}\neq 0$. \\
Let $\mathcal{A}= (\xi, \chi)\times (\xi, \chi)\subset \R^{2}$ with $0<\xi<1<\chi$ such that
\begin{compactenum}[$(i)$]
\item $(1, 1)\in \mathcal{A}$ and $\Phi^{w}(t,s)=(0,0)$ in $\overline{\mathcal{A}}$ if and only if $(t,s)=(1,1)$, 
\item $c_{0} \notin h^{w}(\partial \mathcal{A})$,
\item $\{tw^{+} + sw^{-} : (t, s) \in \overline{\mathcal{A}}\} \subset \B_{R}(w)$
\end{compactenum}
where $h^{w}$ and $\Phi^{w}$ are defined by (\ref{hv}) and (\ref{Phiv}), and satisfy Lemma \ref{lemab}. Then we can take a radius $0<r<R$ such that 
\begin{equation}\label{t4}
\B= \overline{\B_{r}(w)} \subset \B_{R}(w) \, \mbox{ and }\, \B\cap \{tw^{+}+sw^{-} : (t, s)\in \partial \mathcal{A}\}= \emptyset.  
\end{equation}  
Now, let us define a continuous mapping $\rho: \X \rightarrow [0, +\infty)$ such that 
\begin{equation*}
\rho(u):= {\rm dist}(u, \B^{c}) \, \mbox{ for all } u\in \X,
\end{equation*}
and let $\mathbf{V}: \X \rightarrow \X$ be a bounded Lipschitz vector field given by $\mathbf{V}(u)= -\rho(u) v_{0}$. For every $u\in \X$, denoting by $\eta (\tau)= \eta (\tau, u)$, we consider the following Cauchy problem 
\begin{equation*}
\left\{
\begin{array}{ll}
\eta'(\tau)= \mathbf{V}(\eta(\tau)) \, \mbox{ for all } \tau>0, \\
\eta(0)= u. 
\end{array}
\right.
\end{equation*}
Let us note that there exist a continuous deformation $\eta(\tau, u)$ and $\tau_{0}>0$ such that for all $\tau \in [0, \tau_{0}]$ the following properties hold: 
\begin{compactenum}[$(a)$]
\item $\eta(\tau, u)=u$ for all $u\notin \B$, 
\item $\tau \rightarrow J(\eta(\tau, u))$ is decreasing for all $\eta(\tau, u)\in \B$, 
\item $J(\eta(\tau, w))\leq J(w) - \frac{r\, \beta}{2}\tau$. 
\end{compactenum} 
Indeed, $(a)$ follows by the definition of $\rho$. Regarding $(b)$, we can observe that $\langle J'(\eta(\tau)), v_{0} \rangle= \beta >0$ for $\eta(\tau)\in\B \subset \B_{R}(w)$, and, by the definition of $\rho$, we can infer $\rho(\eta(\tau))>0$. Then
\begin{equation*}
\frac{d}{d\tau}(J(\eta(\tau))) = \langle J'(\eta(\tau)), \eta'(\tau) \rangle = -\rho(\eta(\tau)) \langle J'(\eta(\tau)), v_{0} \rangle \leq-\rho(\eta(\tau))\beta<0,  \, \forall \eta(\tau)\in \B,
\end{equation*}  
that is $J(\eta(\tau, u))$ is decreasing with respect to $\tau$. \\
Now we prove $(c)$. Fix $\tau_{0}>0$ such that $\eta(\tau, u)\in \B$ for every $\tau \in [0, \tau_{0}]$, and we assume without loss of generality that
\begin{equation*}
\|\eta(\tau, w)- w\|\leq \frac{r}{2}  \Longleftrightarrow \eta(\tau, w)\in \overline{\B_{\frac{r}{2}}(w)}\, \mbox{ for any } \tau \in [0, \tau_{0}]. 
\end{equation*}
Since $\displaystyle{\rho(\eta(\tau, w))= {\rm dist}(\eta(\tau, w), \B^{c})\geq \frac{r}{2}}$, we can deduce that
\begin{equation*}
\frac{d}{d\tau} J(\eta(\tau, w))\leq -\rho(\eta(\tau, w)) \beta \leq - \frac{r\beta}{2}
\end{equation*}
and, integrating on $[0, \tau_{0}]$, we get 
\begin{equation*}
J(\eta(\tau_{0}, w))-J(w) \leq - \frac{r\beta}{2} \tau_{0}. 
\end{equation*}
Now, we consider a suitable deformed path $\bar{\eta}_{0}: \overline{\mathcal{A}} \rightarrow \X$ defined by 
\begin{equation*}
\bar{\eta}_{\tau_{0}}(t, s):= \eta (\tau_{0}, tw^{+}+sw^{-}), \, \mbox{ for all } (t,s)\in \overline{\mathcal{A}},
\end{equation*}  
and we note that
\begin{equation*}
\max_{(t,s)\in \overline{\mathcal{A}}} J(\bar{\eta}_{\tau_{0}}(t,s))<c_{0}. 
\end{equation*}
Indeed, by $(b)$ and the fact that $\eta(0,u)=u$, we have
\begin{align*}
J(\bar{\eta}_{\tau_{0}}(t,s))&= J(\eta (\tau_{0}, tw^{+}+sw^{-})) \leq  J(\eta (0, tw^{+}+sw^{-})) \\
&= J(tw^{+}+sw^{-}) = h^{w}(t,s)<c_{0} \quad \forall (t,s)\in \overline{\mathcal{A}}\setminus \{(1,1)\}, 
\end{align*}  
and for $(t,s)=(1,1)$, in virtue of $(c)$, we have
\begin{align*}
J(\bar{\eta}_{\tau_{0}}(1,1))&= J(\eta (\tau_{0}, w^{+}+w^{-}))= J(\eta(\tau_{0}, w)) \\
&\leq J(w)- \frac{r\beta}{2} \tau_{0} <J(w)=c_{0}. 
\end{align*}
Then, $\bar{\eta}_{\tau_{0}}(\overline{\mathcal{A}})\cap \mathcal{M}= \emptyset$, that is 
\begin{equation}\label{t5}
\bar{\eta}_{\tau_{0}}(t, s)\notin \mathcal{M} \, \mbox{ for all } (t,s)\in \overline{\mathcal{A}}. 
\end{equation}
On the other hand, setting $\Psi_{\tau_{0}}: \mathcal{A} \rightarrow \R^{2}$ by
\begin{equation*}
\Psi_{\tau_{0}} := \left(\frac{\langle J'(\bar{\eta}_{\tau_{0}}(t,s) ), (\bar{\eta}_{\tau_{0}}(t,s))^{+} \rangle}{t}, \frac{\langle J'(\bar{\eta}_{\tau_{0}}(t,s) ), (\bar{\eta}_{\tau_{0}}(t,s))^{-} \rangle}{s} 	\right)
\end{equation*}
we can see that, for all $(t, s)\in \partial \mathcal{A}$, by (\ref{t4}) and $(a)$ for $\tau=\tau_{0}$, it holds
\begin{equation*}
\Psi_{\tau_{0}}(t,s)= \left( \langle J'(tw^{+}+sw^{-}), w^{+} \rangle, \langle J'(tw^{+}+sw^{-}), w^{-} \rangle \right)= \Phi^{w}(t,s). 
\end{equation*} 
Then, by using Brouwer's topological degree, we have
\begin{equation*}
\rm{deg}(\Psi_{\tau_{0}}, \mathcal{A}, (0,0)) = \rm{deg}(\Phi^{w}, \mathcal{A}, (0,0))= \rm{sgn}(\det(\Phi^{w})'(1,1))=1, 
\end{equation*}
so we deduce that $\Psi_{\tau_{0}}$ has a zero $(\bar{t}, \bar{s})\in \mathcal{A}$, that is
\begin{equation*}
\Psi_{\tau_{0}}(\bar{t}, \bar{s})=(0,0) \Longleftrightarrow \langle J'(\bar{\eta}_{\tau_{0}} (\bar{t}, \bar{s})), (\bar{\eta}_{\tau_{0}} (\bar{t}, \bar{s}))^{\pm} \rangle=0. 
\end{equation*}
Therefore, there exists $(\bar{t}, \bar{s})\in \mathcal{A}$ such that $\bar{\eta}_{\tau_{0}} (\bar{t}, \bar{s}) \in \mathcal{M}$, and this is impossible in view of (\ref{t5}).

\addcontentsline{toc}{section}{\refname}

\end{document}